\documentclass[11pt,leqno]{article}      
\usepackage{graphicx}

\usepackage{latexsym}
\usepackage[cp1250]{inputenc}
\usepackage[OT4]{fontenc}
\usepackage{amsfonts}
\usepackage{amsmath}
\usepackage{amssymb}
\usepackage{url}

\newcommand{\R}{\mathbb{R}}

\newcommand{\Z}{\mathbb{Z}}

\newcommand{\Oo}{{\cal O}}

\newcommand{\rank}{\operatorname{rank}}
\newcommand{\codim}{\operatorname{codim}}
\newcommand{\inter}{\operatorname{int}}
\newcommand{\sgn}{\operatorname{sgn}}

\newcommand{\signature}{\operatorname{signature}}

\newcommand{\Aa}{\mathcal{A}}

\newcommand{\inv}{^{-1}}

\newtheorem{theorem}{Theorem}[section]
\newtheorem{lemma}[theorem]{Lemma}

\newtheorem{proposition}[theorem] {Proposition}

\newenvironment{example}{\medskip \noindent {\bf Example.\ }}{\bigskip}
\newenvironment{proof}{\par\noindent \emph{Proof. }}{\hspace*{\fill}$\Box$\par\medskip}


\title{Polynomial mappings into a Stiefel manifold and  immersions\thanks{%
Iwona~Krzy\.{z}anowska and Zbigniew~Szafraniec\\
University of Gda\'{n}sk,
              Institute of Mathematics \\
              80-952 Gda\'{n}sk, Wita Stwosza 57, Poland\\          
              Email: Iwona.Krzyzanowska@mat.ug.edu.pl\\
              Email: Zbigniew.Szafraniec@mat.ug.edu.pl\\ \\
2000 \emph{Mathematics Subject Classification}:  MSC 14P25, MSC 57R42\\
\emph{Keywords}: Stiefel manifolds, Immersions, Quadratic forms\\\\\emph{Supported by National Science Centre, grant 6093/B/H03/2011/40}
}
}

\author{Iwona Krzy\.{z}anowska \and Zbigniew~Szafraniec}

\date{November 2011}

\begin{document}

\def\nothanksmarks{\def\thanks##1{\protect\footnotetext[0]{\kern-\bibindent##1}}}

\nothanksmarks

\maketitle
%
%
%

\begin{abstract} 
 For a polynomial mapping from
$S^{n-k}$ to the Stiefel manifold 
$\widetilde{V}_k(\R^{n})$, where $n-k$ is even,
there is presented an effective method of expressing the corresponding
element of the homotopy group $\pi_{n-k}\widetilde{V}_k(\R^{n})\simeq\Z$  
 in terms of signatures of quadratic forms.
There is also given a method of computing the intersection number
for a polynomial immersion $S^m\rightarrow\R^{2m}$.
\end{abstract}

\section{Introduction}
Mappings from a sphere into a Stiefel manifold are a natural object of study in topology.
Denote by $\widetilde{V}_k(\R^n)$ the non-compact Stiefel manifold, i.e. the set of all 
$k$--frames in $\R^n$, and take a polynomial mapping $\alpha:S^{n-k}\rightarrow \widetilde{V}_k(\R^n)$.
If $n-k$ is even then the homotopy group $\pi_{n-k}\widetilde{V}_k(\R^n)$ is isomorphic
to $\Z$. Let $\Lambda(\alpha)\in\Z$ be the integer associated with $\alpha$.

In Sections 2 we show that $\Lambda(\alpha)$ is equal to the topological degree
of some associated mapping
$\widetilde{\alpha}:S^{k-1}\times S^{n-k} \longrightarrow \R^n\setminus\{0\}$. 

In Section 3 we prove that one may express $\Lambda(\alpha)$ in terms of
signatures of two quadratic forms (Theorem \ref{efeektywnie}), even in the case where
$S^{n-k}$ is replaced by a compact algebraic hypersurface $M\subset \R^{n-k+1}$.
These signatures  may be computed using computer algebra systems.
Examples presented in this paper were calculated with the help of {\sc Singular}
\cite{GPS06}.

Assume that $m$ is even, $M\subset\R^{m+1}$ is a compact algebraic $m$--dimensional hypersurface,
and $g:M\rightarrow\R^{2m}$ is an immersion. Whitney in \cite{whitneySelfInter} introduced the intersection
number $I(g)\in\Z$. In the case where $M=S^m$, Smale in \cite{Smale} constructed a mapping
$\alpha':S^m\rightarrow \widetilde{V}_m(\R^{2m})$ such that $I(g)=\Lambda(\alpha')$.
Unfortunately, if $g$ is a polynomial immersion then $\alpha'$ is not a polynomial mapping,
so one cannot apply previous results in order to compute $I(g)$.

In Section 4  we show how to construct a polynomial mapping
$\alpha:M\rightarrow \widetilde{V}_{m+1}(\R^{2m+1})$ such that $I(g)=-\Lambda(\alpha)$ (Theorem \ref{immersje}).
Therefore $I(g)$ can be expressed and computed in terms of signatures.

Another formula expressing $I(g)$ in terms of signatures of quadratic forms,
inspired by the original definition by Whitney, was presented in \cite{KarNowSzafr}
and generalized in \cite{krzyzanowska} to the case where $M$ may have singularities.
Calculations done with the help of a computer show that the method presented in this paper
is significantly more effective.

\label{intro}
\section{Mappings into a Stiefel manifold}
\label{sec:1}

If $M,N$ are closed oriented $n$--manifolds and $f:M\longrightarrow N$ continuous, 
then by $\deg (f)$ we denote the topological degree of $f$. If $(M,\partial M)$ is
a compact oriented $n$--manifold with boundary and 
$f:(M,\partial M)\longrightarrow (\R^n,\R^{n}\setminus \{0\})$ is continuous,
then by $\deg (f|\partial M)$ we denote the topological degree of
$f/|f|:\partial M\longrightarrow S^{n-1}$.

Let $n-k> 0$ be an even number and $k>1$.
Denote by $\widetilde{V}_k(\R^n)$ the set of all 
$k$--frames in $\R^n$, and by $V_k(\R^n)$ the Stiefel manifold, 
i.e. the set of all orthonormal $k$--frames in $\R^n$. 
The Stiefel manifold is a deformation retract of $\widetilde{V}_k(\R^n)$, 
so that $\pi_{n-k}V_k(\R^n)=\pi_{n-k}\widetilde{V}_k(\R^n)$. 
It is known (see \cite{hatcher}), that $\pi_{n-k}V_k(\R^n)\simeq\Z$.

Let $[\alpha]\in \pi_{n-k}V_k(\R^n)$ be represented by 
$\alpha =(\alpha_1,\ldots ,\alpha_k):S^{n-k}\longrightarrow V_k(\R^n)$, 
where $\alpha_i:S^{n-k}\longrightarrow S^{n-1}\subset\R^n$. 
Since $\alpha_1(x),\ldots ,\alpha_{k}(x)$ are linearly independent, 
we can define $\widetilde{\alpha}:S^{k-1}\times S^{n-k}\longrightarrow \R^n\setminus \{0\}$ by 
$$\widetilde{\alpha}(\beta,x)=\beta_1\alpha_1(x)+\ldots+\beta_k\alpha_k(x),$$ 
where $\beta=(\beta_1\ldots ,\beta_k)\in S^{k-1}$ and $x=(x_1,\ldots ,x_{n-k+1})\in S^{n-k}$.

\begin{lemma}
The mapping
$\widetilde{\alpha}$ goes into $S^{n-1}$.
\end{lemma}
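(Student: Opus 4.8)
The plan is to compute $\|\widetilde{\alpha}(\beta,x)\|^2$ directly and show it equals $1$. Since $[\alpha]$ is represented here by a map into the compact Stiefel manifold $V_k(\R^n)$ — not merely into $\widetilde{V}_k(\R^n)$ — the frame $\alpha_1(x),\ldots,\alpha_k(x)$ is \emph{orthonormal} for every $x\in S^{n-k}$; this is the one point that has to be invoked, and it is immediate from the definition of $V_k(\R^n)$.

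Concretely, I would expand, using bilinearity of the standard inner product on $\R^n$,
$$\|\widetilde{\alpha}(\beta,x)\|^2=\Big\langle \sum_{i=1}^k\beta_i\alpha_i(x),\ \sum_{j=1}^k\beta_j\alpha_j(x)\Big\rangle=\sum_{i=1}^k\sum_{j=1}^k\beta_i\beta_j\,\langle\alpha_i(x),\alpha_j(x)\rangle.$$
By orthonormality, $\langle\alpha_i(x),\alpha_j(x)\rangle=\delta_{ij}$, so the double sum collapses to $\sum_{i=1}^k\beta_i^2=\|\beta\|^2=1$, because $\beta\in S^{k-1}$. Hence $\widetilde{\alpha}(\beta,x)\in S^{n-1}$, as claimed; note that this also re-confirms $\widetilde{\alpha}(\beta,x)\neq 0$, consistent with the stated target $\R^n\setminus\{0\}$.

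There is essentially no obstacle here: the statement is a one-line consequence of the Pythagorean identity once one records that the representative has been chosen with values in the \emph{orthonormal} frame manifold. The only thing worth a word of care is that the same computation would fail for a general mapping into $\widetilde{V}_k(\R^n)$, which is precisely why one first retracts onto $V_k(\R^n)$; so I would make that dependence explicit rather than leave it implicit.
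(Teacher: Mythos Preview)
Your proof is correct and is essentially the same as the paper's: both expand $|\widetilde{\alpha}(\beta,x)|^2$ bilinearly and use orthonormality of $\alpha_1(x),\ldots,\alpha_k(x)$ together with $|\beta|=1$ to conclude the norm is $1$. Your added remark that the computation hinges on $\alpha$ taking values in $V_k(\R^n)$ rather than $\widetilde{V}_k(\R^n)$ is accurate and matches the paper's setup.
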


\begin{proof} We know that $|\beta|=1$, $| \alpha_i(x) |=1$ 
for $i=1,\ldots ,k$, and the scalar products 
$\langle\alpha_i(x),\alpha_j(x)\rangle=0$ for $i\neq j$. Then 
$$|\widetilde{\alpha}(\beta,x)|^2=
\sum_{i=1}^k\beta_i^2+2\sum_{i\neq j}\beta_i\beta_j\langle\alpha_i(x),\alpha_j(x)\rangle=1.$$
\end{proof}

We got $\widetilde{\alpha}:S^{k-1}\times S^{n-k}\longrightarrow S^{n-1}$, 
so the topological degree $\deg(\widetilde{\alpha})$ is defined.

For $\alpha:S^{n-k}\longrightarrow \widetilde{V}_k(\R^n)$, 
we also have $\widetilde{\alpha}:S^{k-1}\times S^{n-1}\longrightarrow \R^n\setminus \{0\}$. 
Then $\widetilde{\alpha}/|\widetilde{\alpha}|:S^{k-1}\times S^{n-1}\longrightarrow S^{n-1}$, 
and $\deg(\widetilde{\alpha}/|\widetilde{\alpha}|)$ is well defined. 
Let $r:\widetilde{V}_k(\R^n)\longrightarrow V_k(\R^n)$ be the retraction 
given by the Gram-Schmidt orthonormalization. Then  
$\alpha^t=(1-t)\alpha+t\cdot r\circ\alpha:S^{n-k}\longrightarrow\widetilde{V}_k(\R^n)$ 
is a homotopy between $\alpha$ and $r\circ\alpha$. Hence 
$\widetilde{\alpha}^t/|\widetilde{\alpha}^t|: S^{k-1}\times S^{n-k}\longrightarrow S^{n-1}$ 
is a homotopy between $\widetilde{\alpha}/|\widetilde{\alpha}|$ and $\widetilde{r\circ\alpha}$, 
so that $ \deg(\widetilde{\alpha}/|\widetilde{\alpha}|)=\deg(\widetilde{r\circ \alpha})$.

\begin{lemma}\label{homot}
If $\alpha^0,\alpha^1:S^{n-k}\longrightarrow V_k(\R^n)$ represent the same element in 
$\pi_{n-k}V_{k}(\R^n)$, then $\deg (\widetilde{\alpha}^0)=\deg(\widetilde{\alpha}^1)$.
\end{lemma}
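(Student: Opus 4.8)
The plan is to deduce the statement directly from homotopy invariance of the topological degree. First I would use the hypothesis: since $\alpha^0$ and $\alpha^1$ represent the same element of $\pi_{n-k}V_k(\R^n)$, there is a continuous homotopy $F:S^{n-k}\times[0,1]\longrightarrow V_k(\R^n)$ with $F(\cdot,0)=\alpha^0$ and $F(\cdot,1)=\alpha^1$. Writing $F=(F_1,\ldots,F_k)$ with each $F_i:S^{n-k}\times[0,1]\longrightarrow S^{n-1}$, and setting $\alpha^t=F(\cdot,t)$, I would then define
$$\widetilde{F}:S^{k-1}\times S^{n-k}\times[0,1]\longrightarrow\R^n,\qquad \widetilde{F}(\beta,x,t)=\beta_1F_1(x,t)+\cdots+\beta_kF_k(x,t).$$

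Next I would observe that for each fixed $t$ the $k$--frame $\alpha^t$ is orthonormal, so the computation carried out in the proof of the first Lemma applies verbatim and yields $|\widetilde{F}(\beta,x,t)|=1$. Hence $\widetilde{F}$ actually takes values in $S^{n-1}$, and it is a continuous homotopy from $\widetilde{\alpha}^0$ to $\widetilde{\alpha}^1$ as maps $S^{k-1}\times S^{n-k}\longrightarrow S^{n-1}$. Since $S^{k-1}\times S^{n-k}$ is a closed oriented manifold of dimension $(k-1)+(n-k)=n-1=\dim S^{n-1}$, the degree of such a map is defined and is a homotopy invariant, so $\deg(\widetilde{\alpha}^0)=\deg(\widetilde{\alpha}^1)$.

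There is essentially no hard step here; the only points that merit (minor) care are that the homotopy $F$ genuinely stays inside the \emph{orthonormal} Stiefel manifold $V_k(\R^n)$ — which is exactly what lets one reuse the norm computation from the first Lemma so that $\widetilde{F}$ lands in $S^{n-1}$ rather than merely in $\R^n\setminus\{0\}$ — and that a based homotopy between the two representatives is in particular a free homotopy, which is all that the degree argument requires. Fixing once and for all an orientation on the product $S^{k-1}\times S^{n-k}$ makes the asserted equality of degrees (and not merely an equality up to sign) unambiguous.
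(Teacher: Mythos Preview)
Your argument is correct and is essentially identical to the paper's own proof: both take a homotopy $\alpha^t$ in $V_k(\R^n)$, form the induced map $\widetilde{\alpha}^t:S^{k-1}\times S^{n-k}\to S^{n-1}$, and invoke homotopy invariance of degree. The additional remarks you make (based vs.\ free homotopy, dimension count, orientation) are sound but not needed beyond what the paper assumes implicitly.
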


\begin{proof} There is a homotopy 
$\alpha^t=(\alpha_1^t,\ldots , \alpha_k^t):S^{n-k}\longrightarrow V_k(\R^n)$ 
between $\alpha^0$ and $\alpha^1$. Then 
$$\widetilde{\alpha}^t(\beta,x)=\beta_1\alpha_1^t(x)+\ldots+\beta_k\alpha_1^t(x):
S^{k-1}\times S^{n-k}\longrightarrow S^{n-1}$$ 
is a homotopy between $\widetilde{\alpha}^0$ and $\widetilde{\alpha}^1$, 
and so $\deg (\widetilde{\alpha}^0)=\deg(\widetilde{\alpha}^1)$.

\end{proof}

\begin{example}\label{elneutralny}
The trivial element $[e]\in \pi_{n-k}V_k(\R^n)$ is represented by a constant mapping 
$e(x)=(v_1,\ldots ,v_k)\in V_k(\R^n)$. Then 
$\widetilde{e}(\beta,x)=\beta_1v_1+\ldots+\beta_kv_k$ is not onto $S^{n-1}$, so $\deg(\widetilde{e})=0$.
\end{example}

\begin{proposition}\label{parzystosc}
For all $[\alpha]\in \pi_{n-k}V_k(\R^n)$, $\deg(\widetilde{\alpha})$ is an even number.
\end{proposition}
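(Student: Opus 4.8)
The plan is to compute $\deg(\widetilde\alpha)$ modulo $2$: I will count the preimage of a generic point of $S^{n-1}$ and recognise that count as the top Stiefel--Whitney number of an auxiliary bundle over $S^{n-k}$ which turns out to be stably trivial. By Lemma~\ref{homot} I may replace $\alpha$ by any smooth representative of its class, so assume $\alpha=(\alpha_1\kropek\alpha_k):S^{n-k}\to V_k(\R^n)$ is smooth.

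\emph{Step 1: the fibres of $\widetilde\alpha$.} For $x\in S^{n-k}$ put $P_x=\operatorname{span}(\alpha_1(x)\kropek\alpha_k(x))\subset\R^n$ and let $L_x:\R^k\to\R^n$, $L_x(\beta)=\sum_i\beta_i\alpha_i(x)$. Since $\alpha_1(x)\kropek\alpha_k(x)$ is an orthonormal frame, $L_x$ is a linear isometric embedding with image $P_x$. For fixed $y\in S^{n-1}$ one has $\widetilde\alpha(\beta,x)=y$ exactly when $y\in P_x$ and $\beta=L_x\inv(y)$, and then automatically $|\beta|=|y|=1$. Hence $(\beta,x)\mapsto x$ is a bijection from $\widetilde\alpha\inv(y)$ onto $A_y:=\{x\in S^{n-k}:y\in P_x\}$. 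When $y$ is a regular value of $\widetilde\alpha$ this gives $\deg(\widetilde\alpha)\equiv\#A_y\pmod 2$, the domain being a compact manifold of the same dimension $n-1$ as $S^{n-1}$ (here $k>1$ is what makes $S^{k-1}\times S^{n-k}$ a connected closed manifold).

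\emph{Step 2: $A_y$ as a zero set.} Let $\nu\to S^{n-k}$ be the rank $n-k$ subbundle of $S^{n-k}\times\R^n$ with fibre $\nu_x=P_x^{\perp}$ (smooth, since $x\mapsto P_x$ is), and for $y\in\R^n$ let $s_y$ be the section $x\mapsto$ (orthogonal projection of $y$ onto $P_x^{\perp}$), so that $A_y=s_y\inv(0)$. The evaluation map $\R^n\times S^{n-k}\to\nu$, $(y,x)\mapsto s_y(x)$, is transverse to the zero section, since for each $x$ the partial map $y\mapsto s_y(x)$ is already the orthogonal projection $\R^n\to\nu_x$, which is onto the fibre; hence by parametric transversality $s_y$ is transverse to the zero section for almost every $y$, and for such $y$ the number of zeros of $s_y$ is congruent mod $2$ to $\langle w_{n-k}(\nu),[S^{n-k}]\rangle$. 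Choosing $y$ in the intersection of the two full-measure sets ``$y$ a regular value of $\widetilde\alpha$'' and ``$s_y$ transverse to $0$'', Steps 1 and 2 combine to $\deg(\widetilde\alpha)\equiv\langle w_{n-k}(\nu),[S^{n-k}]\rangle\pmod2$.

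\emph{Step 3: vanishing of the characteristic number.} This is where the frame enters decisively: the complementary subbundle of $S^{n-k}\times\R^n$ with fibre $P_x$ is trivialised by $(\alpha_1\kropek\alpha_k)$, so $\nu\oplus\underline{\R^k}\cong\underline{\R^n}$, hence $w(\nu)=1$ and in particular $w_{n-k}(\nu)=0$. Therefore $\deg(\widetilde\alpha)$ is even. I expect the only genuinely delicate point to be the transversality bookkeeping of Step 2 — checking that the unsigned mod-$2$ count of $\widetilde\alpha\inv(y)$ is legitimately identified with a characteristic number of $\nu$ (so that the two genericity conditions on $y$ can simply be imposed simultaneously); once that is secured the bundle argument is immediate. (Note that the parity of $n-k$ is not used in this proof, only $k>1$.)
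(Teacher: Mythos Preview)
Your proof is correct and takes a genuinely different route from the paper's. The paper extends $\alpha$ to a homotopy $g$ on $[0,1]\times S^{n-k}$ from $\alpha$ to the constant map, perturbed (via transversality to $\Sigma_{k-1}(\R^n)$, which has codimension $n-k+2$) so that the last $k-1$ components $g_2,\ldots,g_k$ remain linearly independent throughout. Then $\widetilde g^{-1}(0)$ avoids the equator $\{\beta_1=0\}\subset S^{k-1}$, and the antipodal involution $\beta\mapsto-\beta$ matches the zeros in the two hemispheres with equal local degrees; this matching uses $(-1)^{n+k}=1$, i.e.\ the hypothesis $n-k$ even, and yields $\deg(\widetilde\alpha)=-2\deg(\widetilde g|\partial N_-)$.

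Your argument instead reads the mod-$2$ degree as the top Stiefel--Whitney number of the orthogonal-complement bundle $\nu$, which vanishes because the frame $\alpha$ trivialises the complementary subbundle and hence stably trivialises $\nu$. This is more conceptual, sidesteps the auxiliary homotopy and the involution bookkeeping, and --- as you observe --- does not need $n-k$ even (nor, in fact, $k>1$: mod-$2$ degree is defined for disconnected domains). The paper's approach, by contrast, stays entirely within elementary degree theory and produces an explicit integer identity rather than a mod-$2$ statement, which fits the paper's later goal of expressing $\Lambda(\alpha)$ through computable signature formulas.
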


\begin{proof} Let $M_k(\R^n)$ denote the set of all $k$--tuples of vectors in $\R^n$. 
According to \cite[Proposition 5.3]{golub}, 
$\Sigma_k(\R^n)=M_k(\R^n)\setminus \widetilde{V}_k(\R^n)$ is an algebraic subset of 
$M_k(\R^n)$ with $\codim \Sigma_k(\R^n)=n-k+1$. 

Take $\alpha :S^{n-k}\longrightarrow V_k(\R^n)$. There is a homotopy 
$h=(h_1,\ldots ,h_k):[0,1]\times S^{n-k}\longrightarrow M_k(\R^n)$ between 
$\alpha $ and $e$  given by $h(t,x)=te(x)+(1-t)\alpha(x)$. 
Put $h'=(h_2,\ldots ,h_k):[0,1]\times S^{n-k}\longrightarrow M_{k-1}(\R^n)$. 
According to the Elementary Transversality Theorem \cite[Corollary 4.12]{golub}, 
there is $g'$ arbitrarily close to $h'$ such that $g'$ intersects 
$\Sigma_{k-1}(\R^n)$ transversally. As $\codim\Sigma_{k-1}(\R^n)=n-k+2>\dim([0,1]\times S^{n-k})$, 
so $(g')\inv (\Sigma_{k-1}(\R^n))=\emptyset$ and then 
$g':[0,1]\times S^{n-k}\longrightarrow \widetilde{V}_{k-1}(\R^n)$. 
Put $g=(g_1,\ldots ,g_k)=(h_1,g')$.  We can choose $g'$ such that 
$g(0,\cdot):S^{n-k}\longrightarrow \widetilde{V}_{k}(\R^n)$ is homotopic to 
$\alpha$ and $g(1,\cdot):S^{n-k}\longrightarrow \widetilde{V}_{k}(\R^n)$ is homotopic to $e$. 
Put 
$\widetilde{g}:[0,1]\times S^{k-1}\times S^{n-k}\longrightarrow \R^n$ as  $\widetilde{g}(t,\beta,x)=
\beta_1g_1(t,x)+\ldots +\beta_kg_k(t,x)$.

 We know that $g_2(t,x),\ldots ,g_k(t,x)$ are linearly independent, 
so $\widetilde{g}(t,(0,\beta '),x)\neq 0$ for $\beta=(0,\beta ')\in \{0\}\times S^{k-2}$. 
Hence 
$$\widetilde{g}\inv(0)\cap ([0,1]\times (\{0\}\times S^{k-2})\times S^{n-k})=\emptyset  .$$
Put $H_{\pm}=\{\beta\in S^{k-1}|\ \pm\beta_1>0\}$,  and 
$M_{\pm}=[0,1]\times H_{\pm}\times S^{n-k}$. 
Then $\widetilde{g}\inv (0)\subset M_-\cup M_+$. Of course $i(t,\beta, x)=(t,-\beta ,x)$ 
is a free involution on $[0,1]\times S^{k-1}\times S^{n-k}$, such that 
$$\widetilde{g}\inv(0)\cap M_-=i(\widetilde{g}\inv(0)\cap M_+).$$  
We also have $\widetilde{g}\inv(0)\cap \{0,1\}\times S^{k-1}\times S^{n-k}=\emptyset$. 
So there exist two $n$--dimensional compact manifolds with boundary 
$N_{\pm}\subset \inter(M_{\pm})$  such that 
$\widetilde{g}(0)\subset \inter(N_-)\cup \inter(N_+)$ and $N_-=i(N_+)$. 
By the Excision Theorem, 

$$\deg(\widetilde{g}|\partial([0,1]\times S^{k-1}\times S^{n-k}))=
\deg(\widetilde{g}|\partial N_-)+\deg(\widetilde{g}|\partial N_+).$$ 
It is easy to check that $i:N_+\longrightarrow N_-$ preserves the orientation when $k$ is even, 
and  reverses it when $k$ is odd. 
We have  
$$\widetilde{g}|\partial N_+=-\widetilde{g}\circ i|\partial N_+=
-id|S^{n-1}\circ \widetilde{g}|\partial N_-\circ i|\partial N_+.$$ 
Of course $\deg (-id|S^{n-1})=(-1)^n$. Hence 
$$\deg(\widetilde{g}|\partial N_+)=(-1)^n(-1)^k\deg(\widetilde{g}|\partial N_-)=
(-1)^{n+k}\deg(\widetilde{g}|\partial N_-)=\deg(\widetilde{g}|\partial N_-).$$ 
On the other hand 
$$\deg(\widetilde{g}|\partial([0,1]\times S^{k-1}\times S^{n-k}))$$
$$=\deg(\widetilde{g}|(\{0\}\times S^{k-1}\times S^{n-k}))+\deg(\widetilde{g}|(\{1\}\times S^{k-1}\times S^{n-k}))$$
$$=\deg (\widetilde{e})-\deg (\widetilde{\alpha})=-\deg (\widetilde{\alpha}).$$ 
To sum up, we get that $\deg (\widetilde{\alpha})=-2\deg(\widetilde{g}|\partial N_-)$, so 
$\deg (\widetilde{\alpha})$ is even.

\end{proof}

Let us define 
$\Lambda:\pi_{n-k}V_k(\R^n)\longrightarrow \Z$ by $\Lambda([\alpha])=\deg(\widetilde{\alpha})/2$. 
According to  Proposition \ref{parzystosc}, $\Lambda$ is well defined.

\begin{proposition}
The mapping $\Lambda$ is a group homomorphism.
\end{proposition}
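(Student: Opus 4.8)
The plan is to show that $\Lambda([\alpha]\cdot[\gamma]) = \Lambda([\alpha]) + \Lambda([\gamma])$ for $[\alpha],[\gamma]\in\pi_{n-k}V_k(\R^n)$ by passing through the degree homomorphism. Since $\Lambda([\alpha]) = \deg(\widetilde{\alpha})/2$ and dividing by $2$ respects addition, it suffices to prove that $\alpha\mapsto\deg(\widetilde{\alpha})$ is additive; by Lemma \ref{homot} this is a well-defined map on $\pi_{n-k}V_k(\R^n)$. First I would recall the standard model for the group operation on $\pi_{n-k}$: represent the product $[\alpha]\cdot[\gamma]$ by a map $\delta:S^{n-k}\to V_k(\R^n)$ that is constant equal to a fixed frame $e=(v_1,\dots,v_k)$ on the ``equatorial'' hemisphere boundary, and that equals (a reparametrization of) $\alpha$ on one hemisphere $D_-\cong D^{n-k}$ and $\gamma$ on the other hemisphere $D_+\cong D^{n-k}$, after each has been collapsed near $\partial D^{n-k}$ to the constant $e$.

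Next I would analyze $\widetilde{\delta}:S^{k-1}\times S^{n-k}\to S^{n-1}$. Split $S^{n-k} = D_-\cup D_+$ along the equator $E\cong S^{n-k-1}$. On $S^{k-1}\times D_+$ the map $\widetilde{\delta}$ agrees, up to the collapsing homotopy, with $\widetilde{\gamma}$ restricted to $S^{k-1}\times D_+$, and near $S^{k-1}\times E$ it is the constant-frame map $\widetilde{e}(\beta,x)=\sum\beta_i v_i$, whose image is a fixed great $S^{k-2}\subset S^{n-1}$ (missing at least one point since $k-1<n-1$); similarly on $S^{k-1}\times D_-$ with $\widetilde{\alpha}$. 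The key computation is then a degree-additivity statement for maps $S^{k-1}\times S^{n-k}\to S^{n-1}$ that factor through a collapse along $S^{k-1}\times E$: choosing a regular value $p\in S^{n-1}$ not lying on the great sphere $\widetilde{e}(S^{k-1}\times E)$, the preimage $\widetilde{\delta}^{-1}(p)$ is disjoint from a neighbourhood of $S^{k-1}\times E$, hence splits into a part inside $S^{k-1}\times\inter(D_-)$ contributing $\deg(\widetilde{\alpha})$ and a part inside $S^{k-1}\times\inter(D_+)$ contributing $\deg(\widetilde{\gamma})$, with signs matching because the hemisphere identifications preserve orientation. This gives $\deg(\widetilde{\delta}) = \deg(\widetilde{\alpha}) + \deg(\widetilde{\gamma})$, and dividing by $2$ finishes it. It also remains to check $\Lambda$ sends the identity to $0$, which is Example \ref{elneutralny}, and that it respects inverses, which follows formally once additivity and $\Lambda([e])=0$ are known.

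The main obstacle I expect is making the collapse/hemisphere argument precise at the level of the associated maps $\widetilde{(\cdot)}$: one must verify that the homotopy collapsing $\alpha$ near $\partial D_-$ to the constant $e$ induces, via $\beta\mapsto\sum\beta_i(\cdot)$, an honest homotopy of maps into $S^{n-1}$ (this uses Lemma 2.1 to stay on the sphere), and that it does not create preimages of the chosen regular value $p$ near the equator — which is exactly why $p$ must be chosen off the great subsphere $\widetilde{e}(S^{k-1}\times E)$, available since $k-1 \le n-2$. A cleaner alternative, which I would fall back on if the direct preimage count gets delicate, is to use the excision/additivity of degree for a map of a manifold-with-boundary into $(\R^n,\R^n\setminus\{0\})$, exactly in the style of the proof of Proposition \ref{parzystosc}: decompose $S^{k-1}\times S^{n-k}$ into two pieces meeting along $S^{k-1}\times E$ where $\widetilde{\delta}$ is the non-surjective map $\widetilde{e}$, and invoke the Excision Theorem to split the degree as a sum of the degrees of the restrictions, each identified with $\deg(\widetilde{\alpha})$ and $\deg(\widetilde{\gamma})$ by the homotopies above.
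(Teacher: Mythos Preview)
Your approach is essentially the same as the paper's: represent the sum via the pinch map collapsing the equator $S^{n-k-1}$, observe that on $S^{k-1}\times S^{n-k-1}$ the associated map $\widetilde{(\cdot)}$ has image in a proper great subsphere of $S^{n-1}$ (hence misses a regular value), and split the preimage into the two hemispheres to obtain $\deg(\widetilde{\alpha+\gamma})=\deg(\widetilde\alpha)+\deg(\widetilde\gamma)$. One minor slip: the image $\widetilde{e}(S^{k-1}\times E)$ is a great $S^{k-1}$, not $S^{k-2}$, but your inequality $k-1<n-1$ (valid since $n-k\geq 2$) still yields the needed non-density.
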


\begin{proof} Take $[\alpha_0],[\alpha_1]\in\pi_{n-k}V_k(\R^n)$, represented by smooth maps such that 
$\alpha_0(x_0)=\alpha_1(x_0)=y_0$. The sum $[\alpha_0]+[\alpha_1]$ in $\pi_{n-k}V_k(\R^n)$ 
is represented by the composition 
$(\alpha_0\vee \alpha_1)\circ c:S^{n-k}\longrightarrow S^{n-k}\vee S^{n-k}\longrightarrow V_k(\R^n)$, 
where $c$ collapses the equator $S^{n-k-1}$ in $S^{n-k}$ to a point, 
$x_0$ lies in $S^{n-k-1}$ and $S^{n-k}\vee S^{n-k}$ is the disjoint union of $S^{n-k}$ and 
$S^{n-k}$ with the identification $x_0\sim x_0$.
Then $\widetilde{\alpha_0+\alpha_1}:S^{k-1}\times S^{n-k}\longrightarrow S^{n-1}$. 
It is easy to see that $\widetilde{\alpha_0+\alpha_1}(S^{k-1}\times S^{n-k-1})$ 
is not dense in $S^{n-1}$. So there is a regular value $y\in S^{n-1}$ 
such that $\widetilde{\alpha_0+\alpha_1}\inv(y)\cap (S^{k-1}\times S^{n-k-1})=\emptyset$. 
Then $\widetilde{\alpha_0+\alpha_1}\inv(y)$ is the disjoint union of 
$\widetilde{\alpha}_0\inv(y)$ and $\widetilde{\alpha}_1\inv(y)$, so 
$\deg(\widetilde{\alpha_0+\alpha_1})=\deg(\widetilde{\alpha}_0)+\deg(\widetilde{\alpha}_1)$.

\end{proof}

\begin{proposition}
 The mapping $\Lambda$ is surjective.
\end{proposition}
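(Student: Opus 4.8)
The plan is to exhibit a single explicit element of $\pi_{n-k}V_k(\R^n)$ on which $\Lambda$ takes the value $\pm 1$; since $\Lambda$ is a homomorphism to $\Z$ and $\pi_{n-k}V_k(\R^n)\simeq\Z$, hitting a generator (up to sign) forces surjectivity. The natural candidate is the generator coming from the fibration $V_{k-1}(\R^{n-1})\hookrightarrow V_k(\R^n)\to S^{n-1}$, or more concretely the clutching/characteristic map of the tangent bundle of $S^{n-k+1}$: because $n-k$ is even, $S^{n-k+1}$ is odd-dimensional, and one can build $\alpha=(\alpha_1,\ldots,\alpha_k):S^{n-k}\to V_k(\R^n)$ out of $k-1$ pointwise-orthonormal tangent vector fields on $S^{n-k}$ together with the position vector, after embedding $\R^{n-k+1}\subset\R^n$. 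The key point is that for such an $\alpha$ the associated map $\widetilde\alpha(\beta,x)=\beta_1\alpha_1(x)+\cdots+\beta_k\alpha_k(x)$ is, up to homotopy, a join-type map whose degree can be read off directly.

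First I would fix the standard embedding $\R^{n-k+1}\hookrightarrow\R^n$ and set $\alpha_1(x)=x$ (the outward normal of $S^{n-k}\subset\R^{n-k+1}$), and take $\alpha_2,\ldots,\alpha_k$ to be a frame field obtained from the classical vector fields on odd spheres (writing $n-k=2p$, pair up coordinates and rotate); one needs $k-1\le$ the maximal number of independent vector fields, which holds here in the easy range since we are free to also use the extra $n-(n-k+1)=k-1$ coordinate directions in $\R^n$ orthogonal to $\R^{n-k+1}$ as constant vectors if necessary — so in fact a clean choice is $\alpha_1(x)=(x,0)$ and $\alpha_{j+1}(x)=(0,e_j)$ for $j=1,\ldots,k-1$, where $e_j$ are standard basis vectors of the complementary $\R^{k-1}$. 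Then $\widetilde\alpha(\beta,x)=(\beta_1 x,\ \beta_2,\ldots,\beta_k)\in\R^{n-k+1}\times\R^{k-1}$, which vanishes only when $\beta_1=0$ and $\beta_2=\cdots=\beta_k=0$, impossible on $S^{k-1}$; but this map is not surjective onto $S^{n-1}$ — it has degree $0$ — so this trivial choice is exactly the element $[e]$ and will not do.

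Hence the second step is the real content: I must choose the $\alpha_j$ so that $\widetilde\alpha$ genuinely sweeps out $S^{n-1}$. The right model is to take $\alpha$ to represent the generator of $\pi_{n-k}V_k(\R^n)$ and then compute $\deg\widetilde\alpha$ by a direct transversality count at one regular value. Concretely, using the fibration above, the generator lifts a generator of $\pi_{n-k}S^{n-k}$ through a section-type construction; the associated $\widetilde\alpha$ then restricts on a suitable $S^{n-k}$-slice to a degree-$\pm1$ map onto a great sphere and on the $S^{k-1}$-factor doubles the count, matching the "$\deg\widetilde\alpha$ is even" phenomenon of Proposition~\ref{parzystosc}. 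I would make the regular value $y\in S^{n-1}$ lie in the complementary $\R^{k-1}$ directions, so that $\widetilde\alpha\inv(y)$ consists of two points $(\pm\beta^0,x^0)$ coming from the antipodal symmetry $\beta\mapsto-\beta$, each contributing $+1$ (the sign works out since $n-k$ is even, as in the orientation bookkeeping already carried out in Proposition~\ref{parzystosc}), giving $\deg\widetilde\alpha=2$ and hence $\Lambda([\alpha])=1$.

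The main obstacle is pinning down an explicit representative of the generator of $\pi_{n-k}V_k(\R^n)$ concrete enough to evaluate the signed preimage count of $\widetilde\alpha$ at a point, rather than merely invoking abstract isomorphisms; the orientation signs in that count are delicate, but they are governed by exactly the same involution computation $(-1)^{n+k}=1$ (valid because $n-k$ is even) that appears in the proof of Proposition~\ref{parzystosc}, so once the representative is fixed the sign analysis is a bounded, routine verification. An alternative, if an explicit generator is awkward, is to argue indirectly: show that $\Lambda$ composed with the boundary map in the fibration's long exact sequence, or with a known generator of $\pi_{n-k}V_k(\R^n)$ detected by a characteristic class / the Euler-number interpretation, is an isomorphism onto $\Z$; but the direct degree-$2$ computation is cleaner and is what I would write up.
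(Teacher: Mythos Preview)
Your first explicit attempt is essentially the paper's own construction, and you discarded it on a false premise. The paper takes $\alpha_i(x)=e_i$ for $1\le i\le k-1$ and $\alpha_k(x)=(0,\ldots,0,x_1,\ldots,x_{n-k+1})$, so that
\[
\widetilde{\alpha}(\beta,x)=(\beta_1,\ldots,\beta_{k-1},\beta_k x_1,\ldots,\beta_k x_{n-k+1}),
\]
which up to a permutation of factors is exactly your $(\beta_1 x,\beta_2,\ldots,\beta_k)$. This map \emph{is} surjective onto $S^{n-1}$: the preimage of $(0,\ldots,0,1)$ (in the paper's ordering) consists of the two points with $\beta_k=x_{n-k+1}=\pm 1$ and all other coordinates zero. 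The paper then writes down orientation-preserving charts near each preimage point, checks that both Jacobians have the same sign, and obtains $\deg(\widetilde{\alpha})=2$, hence $\Lambda([\alpha])=1$. Your assertion that the map ``is not surjective onto $S^{n-1}$ --- it has degree $0$'' is simply wrong; note also that, abstractly, your $\alpha$ is the image of the identity under the fibre inclusion $S^{n-k}\hookrightarrow V_k(\R^n)$ of the bundle $V_k(\R^n)\to V_{k-1}(\R^n)$, and the long exact sequence (with $n-k$ even) shows this induces an isomorphism $\pi_{n-k}S^{n-k}\to\pi_{n-k}V_k(\R^n)$, so $[\alpha]$ is a generator, not $[e]$.

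Consequently your ``second step'' is not needed, and as written it does not constitute a proof: you never specify a concrete representative of the generator, never compute a preimage, and never verify a sign. The correct fix is to return to your first map, pick a regular value as above, and carry out the two-point Jacobian computation --- that is precisely the paper's proof.
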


\begin{proof} Let us define a mapping 
$\alpha=(\alpha_1,\ldots , \alpha_k) : S^{n-k}\longrightarrow V_k(\R^n)$ by 
$\alpha_i(x)=(0,\ldots , 1, 0, \ldots ,0)$, with the $1$ in the $i$--th coordinate for 
$i=1,\ldots ,k-1$, and $\alpha_k=(0,\ldots ,0,x_1,\ldots , x_{n-k+1})$. Then 
$$\widetilde{\alpha}(\beta, x)=
(\beta_1,\ldots , \beta_{k-1},\beta_kx_1,\ldots , \beta_{k}x_{n-k},\beta_kx_{n-k+1})\in S^{n-1}.$$ 
It is easy to check that 
$\widetilde{\alpha}\inv (0,\ldots ,0,1)=
\{(\beta,x)|\ \beta_1=\ldots=\beta_{k-1}=x_1=\ldots=x_{n-k}=0,\beta_k=x_{n-k+1}=
\pm 1\}$. 
Projection onto first $(n-1)$-- coordinates in a neighbourhood of $(0,\ldots ,0,1)\in S^{n-1}$ 
is an orientation preserving chart if
and only if $n$ is odd. Near $(0,\ldots ,0,1,0,\ldots ,0, 1)\in S^{k-1}\times S^{n-k}$ 
we have an orientation preserving parametrization given by
 $$(\beta',x')=(\beta_1,\ldots ,\beta_{k-1},x_1,\ldots ,x_{n-k})\longmapsto$$ 
$$\left((-1)^{k-1}\beta_1,\beta_2\ldots ,\beta_{k-1},\sqrt{1-|\beta'|^2},x_1,\ldots ,x_{n-k},\sqrt{1-|x'|^2}\right).$$ 
It is easy to check that in these coordinates the derivative matrix of 
$\widetilde{\alpha}$ at $(0,\ldots ,0,1,0,\ldots ,0, 1)$ has the form 
$$\left [
\begin{array}{cccc}
(-1)^{k-1}&0&\ldots &0\\
0&1&\ldots&0\\
&&\ddots&\\
0&0&\ldots &1
\end{array} \right ].$$
Near $(0,\ldots ,0,-1,0,\ldots ,0, -1)\in S^{k-1}\times S^{n-k}$ we have an orientation preserving parametrization 
 $$(\beta',x')=(\beta_1,\ldots ,\beta_{k-1},x_1,\ldots ,x_{n-k})\longmapsto $$
$$\left((-1)^k\beta_1,\beta_2,\ldots ,\beta_{k-1},-\sqrt{1-|\beta'|^2},-x_1,x_2,\ldots ,x_{n-k},-\sqrt{1-|x'|^2}\right).$$

It is easy to check that the derivative matrix of $\widetilde{\alpha}$ at 
$(0,\ldots ,0,-1,0,\ldots ,0, -1)$ in these coordinates has the form 
$$\left [
\begin{array}{ccccccc}
(-1)^k&0&\ldots &0&0&\ldots&0\\
0&1&\ldots &0&0&\ldots &0\\
&&\ddots&&&\ddots&\\
0&0&\ldots &1&0&\ldots &0\\
0&0&\ldots &0&-1&\ldots &0\\
&&\ddots&&&\ddots&\\
0&0& \ldots &0&0 &\ldots & -1\\
\end{array} \right ]$$
Then $(0,\ldots ,0,1)$ is a regular value of $\widetilde{\alpha}$, and 
$\deg(\widetilde{\alpha})=(-1)^{n-1}((-1)^{k-1}+(-1)^k(-1)^{n-k-1})=2$, so $\Lambda(\alpha)=1$.
By Proposition 2, $\Lambda$ is surjective.

\end{proof}

\begin{theorem}
If $n-k$ is even then $\Lambda:\pi_{n-k}V_k(\R^n)\longrightarrow\Z$ is  an isomorphism.
\end{theorem}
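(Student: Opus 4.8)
The plan is to show $\Lambda$ is an isomorphism by combining the three propositions already established. Since $\pi_{n-k}V_k(\R^n)\simeq\Z$ and the preceding propositions show that $\Lambda\colon\pi_{n-k}V_k(\R^n)\to\Z$ is a well-defined group homomorphism that is surjective, it remains only to observe that a surjective homomorphism $\Z\to\Z$ is automatically injective, hence an isomorphism.

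More concretely, I would argue as follows. Fix an isomorphism $\varphi\colon\Z\to\pi_{n-k}V_k(\R^n)$, whose existence is quoted from \cite{hatcher} under the hypothesis that $n-k$ is even. Then $\Lambda\circ\varphi\colon\Z\to\Z$ is a group homomorphism, hence is multiplication by some fixed integer $d=(\Lambda\circ\varphi)(1)$. Surjectivity of $\Lambda$ (the last Proposition) forces $\Lambda\circ\varphi$ to be surjective as well, so the image $d\Z$ equals $\Z$, which means $d=\pm1$. A homomorphism $\Z\to\Z$ given by multiplication by $\pm1$ is a bijection, so $\Lambda\circ\varphi$ is an isomorphism, and therefore $\Lambda=(\Lambda\circ\varphi)\circ\varphi^{-1}$ is an isomorphism too.

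There is essentially no obstacle here: the theorem is a formal consequence of the groundwork laid in Section \ref{sec:1}, and the only mildly delicate point is making sure the hypothesis ``$n-k$ even'' is genuinely used — it enters precisely in the identification $\pi_{n-k}V_k(\R^n)\simeq\Z$ (for $n-k$ odd the relevant homotopy group need not be infinite cyclic) and in Proposition \ref{parzystosc}, where the parity computation $(-1)^{n+k}=1$ (equivalently $n-k$ even) is what guarantees $\deg(\widetilde{\alpha})$ is even so that $\Lambda$ is integer-valued. I would simply note these dependencies and conclude. The write-up is a few lines.
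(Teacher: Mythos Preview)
Your proof is correct and follows exactly the paper's approach: the paper's own proof is the single line ``Since $\pi_{n-k}V_k(\R^n)\simeq\Z$, the surjective homomorphism $\Lambda$ is an isomorphism,'' which is precisely the observation you spell out in slightly more detail. Your added remarks on where the hypothesis $n-k$ even is actually used are accurate and helpful but not needed for the argument itself.
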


\begin{proof}
Since $\pi_{n-k}V_k(\R^n)\simeq\Z$, the surjective homomorphism $\Lambda$ is an isomorphism.
\end{proof}

Let $M$ be a closed oriented $(n-k)$--manifold. With any 
$\alpha:M\longrightarrow \widetilde{V}_{k}(\R^n)$ we  may associate the same way as above
the mapping
$\widetilde{\alpha}:S^{k-1}\times M\longrightarrow \R^n\setminus\{0\}$ given by 
$$\widetilde{\alpha}(\beta,x)=\beta_1\alpha_1(x)+\ldots+\beta_k\alpha_k(x).$$ 
Then the topological degree of $\widetilde{\alpha}$ is well defined.
Applying the same arguments as in the proof of Proposition \ref{parzystosc}, 
one can prove

\begin{theorem} \label{parzystoscstopnia}
Let $\alpha:M\longrightarrow \widetilde{V}_{k}(\R^n)$ be continuous, where $n-k$  is even.
Then $\deg( \widetilde{\alpha})$ is even, and so
 $\Lambda(\alpha)=\deg(\widetilde\alpha)/2$ is an integer. If
$\alpha^0,\alpha^1:M\rightarrow\widetilde{V}_k(\R^n)$ are homotopic, then
$\Lambda(\alpha^0)=\Lambda(\alpha^1)$.

\end{theorem}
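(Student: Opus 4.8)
The plan is to mimic the proof of Proposition \ref{parzystosc} almost verbatim, with the sphere $S^{n-k}$ replaced by the closed oriented $(n-k)$--manifold $M$ throughout; the whole argument only used that $S^{n-k}$ is a closed oriented manifold of dimension $n-k$, not its specific geometry. First I would note that the degree $\deg(\widetilde\alpha)$ for $\widetilde\alpha:S^{k-1}\times M\to S^{n-1}$ is well defined since $S^{k-1}\times M$ is a closed oriented $(n-1)$--manifold (orient it by the product orientation, with $S^{k-1}$ carrying its standard orientation). The homotopy invariance statement is immediate: a homotopy $\alpha^t:M\to\widetilde V_k(\R^n)$ induces, via $\widetilde{\alpha^t}(\beta,x)=\beta_1\alpha_1^t(x)+\dots+\beta_k\alpha_k^t(x)$ composed with normalization if necessary, a homotopy $S^{k-1}\times M\times[0,1]\to S^{n-1}$, so $\deg(\widetilde{\alpha^0})=\deg(\widetilde{\alpha^1})$; this is the same reasoning as Lemma \ref{homot}.

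For the parity statement I would reproduce the cobordism argument. After a small perturbation one reduces to $\alpha:M\to V_k(\R^n)$ (using the Gram--Schmidt retraction $r$, which gives a homotopy and hence preserves the degree by the previous paragraph). Then form the straight-line homotopy $h(t,x)=te(x)+(1-t)\alpha(x):[0,1]\times M\to M_k(\R^n)$ between $\alpha$ and a constant frame $e$, write $h=(h_1,h')$ with $h'=(h_2,\dots,h_k)$, and invoke the Elementary Transversality Theorem: since $\codim\Sigma_{k-1}(\R^n)=n-k+2>n-k+1=\dim([0,1]\times M)$, a generic perturbation $g'$ of $h'$ avoids $\Sigma_{k-1}(\R^n)$ entirely, so $g=(h_1,g')$ lands in $\widetilde V_{k-1}(\R^n)$ in its last $k-1$ coordinates and still interpolates (up to homotopy) between $\alpha$ and $e$. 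Setting $\widetilde g(t,\beta,x)=\beta_1g_1(t,x)+\dots+\beta_kg_k(t,x)$ on $[0,1]\times S^{k-1}\times M$, the zero set $\widetilde g^{-1}(0)$ is disjoint from $[0,1]\times(\{0\}\times S^{k-2})\times M$ and from $\{0,1\}\times S^{k-1}\times M$, and the free involution $i(t,\beta,x)=(t,-\beta,x)$ swaps the two halves $M_\pm=[0,1]\times H_\pm\times M$ that contain it. The Excision Theorem then splits $\deg(\widetilde g\,|\,\partial([0,1]\times S^{k-1}\times M))$ into two equal pieces — equal because $i$ changes orientation by $(-1)^k$ on the domain factor $S^{k-1}$ while precomposition shifts the degree by $(-1)^n$ via $\deg(-\mathrm{id}_{S^{n-1}})$, and $(-1)^{n+k}=1$ since $n-k$ is even. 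Comparing with the evaluation on $\{0\}\times S^{k-1}\times M$ and $\{1\}\times S^{k-1}\times M$, which equals $\deg(\widetilde e)-\deg(\widetilde\alpha)=-\deg(\widetilde\alpha)$ by Example \ref{elneutralny}, yields $\deg(\widetilde\alpha)=-2\deg(\widetilde g\,|\,\partial N_-)$, an even number.

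The one point that genuinely needs attention, and which I expect to be the main (mild) obstacle, is the orientation bookkeeping for $i:N_+\to N_-$ on the general manifold $M$: one must check that the sign by which $i$ multiplies the orientation depends only on $k$ (through the antipodal map on $S^{k-1}$) and not on $M$, since $i$ acts as the identity on the $[0,1]$ and $M$ factors and these contribute trivially to the Jacobian sign. Once that is observed the parity conclusion $(-1)^{n+k}=1$ carries through exactly as before, and the remaining steps — choosing the tubular neighbourhoods $N_\pm\subset\inter(M_\pm)$ with $N_-=i(N_+)$, and the two applications of the Excision/additivity properties of degree for maps of manifolds with boundary into $(\R^n,\R^n\setminus\{0\})$ — are routine and identical to Proposition \ref{parzystosc}. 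I would close by remarking that the statement that $\Lambda(\alpha)=\deg(\widetilde\alpha)/2$ is an integer is then immediate, and that its homotopy invariance was established in the first paragraph.
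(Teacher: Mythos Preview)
Your proposal is correct and is exactly the approach the paper takes: the paper does not give a separate proof of Theorem~\ref{parzystoscstopnia} but simply states that one applies the same arguments as in the proof of Proposition~\ref{parzystosc} (and, implicitly, Lemma~\ref{homot} for the homotopy-invariance part), which is precisely what you have written out. Your remark that the orientation sign of the involution $i$ depends only on the $S^{k-1}$ factor is the one point worth making explicit in the generalization, and you have handled it correctly.
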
\hspace*{\fill}$\Box$\par\medskip

\section{Polynomial mappings into a Stiefel manifold}\label{efektywnie}
If $U$ is an open subset of an $n$-dimensional oriented manifold, 
$H:U\longrightarrow \R^n$ is  continuous and $p\in H\inv(0)$ is isolated in 
$H\inv(0)$, then by $\deg_p H$ we denote the local topological degree of $H$ at $p$. 
If $H^{-1}(0)$ is compact, then there exists a compact manifold with boundary
$N\subset U$ such that $H^{-1}(0)\subset\operatorname{int}(N)$.
If that is the case then the topological degree $\deg(H,U,0)$ is defined as the degree
of the mapping $\partial N\ni x\mapsto H(x)/|H(x)|\in S^{n-1}$.
In particular, if $H^{-1}(0)$ is finite then $\deg(H,U,0)=\sum \deg_p H$, where 
$p\in H^{-1}(0)$.

Let $\alpha=(\alpha_1,\ldots ,\alpha_k):\R^{n-k+1}\longrightarrow M_k(\R^n)$ 
be a polynomial mapping. Denote by $\left [a_{ij}(x)\right ]$, 
$1\leq i\leq n$, $1\leq j\leq k$, the matrix in which $\alpha_j(x)$ 
stands in the $j$--th column. Define 
$$\widetilde{\alpha}(\beta, x)=
\beta_1\alpha_1(x)+\ldots +\beta_k\alpha_k(x)=\left [a_{ij}(x)\right ]\left [\begin{array}{c}
\beta_1\\
\vdots\\
\beta_k
\end{array} \right ]:\R^k\times \R^{n-k+1}\longrightarrow \R^n.$$
By $I$ we denote the ideal in $\R[x_1,\ldots ,x_{n-k+1}]$ generated by all 
$k\times k$ minors of  $\left [a_{ij}(x)\right ]$. Let $V(I)=\{x\in\R^{n-k+1}\ |\  h(x)=0\mbox{ for all }h\in I\}$.

\begin{lemma}\label{zaleznosc}
We have $p\in V(I)$ if and only if  $\alpha_1(p), \ldots ,\alpha_k(p)$ are linearly dependent, 
i.e. if  $\widetilde{\alpha}(\beta, p)=0$ for some $\beta\neq 0$.
\end{lemma}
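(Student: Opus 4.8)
The plan is to unravel the definitions on both sides and observe that they describe the same linear-algebra condition. The key object is the matrix $[a_{ij}(p)]$, whose columns are exactly the vectors $\alpha_1(p),\ldots,\alpha_k(p)\in\R^n$. Recall that $I$ is generated by the $k\times k$ minors of $[a_{ij}(x)]$, so $p\in V(I)$ means precisely that every $k\times k$ minor of $[a_{ij}(p)]$ vanishes, i.e. $\rank[a_{ij}(p)]<k$.

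First I would record the standard fact that, for an $n\times k$ matrix $A$ with $k\le n$, one has $\rank A<k$ if and only if the columns of $A$ are linearly dependent, and this in turn holds if and only if the kernel of $A$ is nontrivial, i.e. $A\beta=0$ for some $\beta\neq 0$. The minors-characterisation of rank (a matrix has rank $<k$ exactly when all its $k\times k$ minors vanish) is the only nontrivial input, and it is entirely classical. Applying this with $A=[a_{ij}(p)]$ gives the chain of equivalences: $p\in V(I)\iff\rank[a_{ij}(p)]<k\iff\alpha_1(p),\ldots,\alpha_k(p)$ linearly dependent $\iff\exists\,\beta\neq0$ with $[a_{ij}(p)]\beta=0$.

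Finally I would match the last condition with the statement as phrased in terms of $\widetilde{\alpha}$: by the very definition $\widetilde{\alpha}(\beta,p)=\beta_1\alpha_1(p)+\cdots+\beta_k\alpha_k(p)=[a_{ij}(p)]\,\beta$, so $[a_{ij}(p)]\beta=0$ is literally the same as $\widetilde{\alpha}(\beta,p)=0$. Stringing the equivalences together yields the lemma. There is no real obstacle here; the "hard part" is merely citing the rank/minors equivalence correctly, since every other step is an unwinding of notation introduced just above the statement.
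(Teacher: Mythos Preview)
Your proposal is correct and matches the paper's treatment: the paper states the lemma without proof (just a $\Box$), since it is precisely the standard equivalence between vanishing of all $k\times k$ minors, $\rank[a_{ij}(p)]<k$, linear dependence of the columns, and nontriviality of the kernel. Your unwinding of the definitions is exactly what is implicitly being invoked.
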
\hspace*{\fill}$\Box$\par\medskip

Let $I_1$ be the ideal generated by all $(k-1)\times(k-1)$ minors of $[a_{ij}(x)]$.
Put 
$$m(x)=\det\left [\begin{array}{ccc}
a_{12}(x)&\ldots &a_{1k}(x)\\ \\
a_{k-1,2}(x)&\ldots &a_{k-1,k}(x) 
\end{array} \right ].$$

\begin{lemma}\label{zaleznosc1}
We have $V(I)\setminus V(I_1)=\emptyset$ if and only if
$\rank [\alpha_1(p),\ldots,\alpha_k(p)]=k-1$ at each $p\in V(I)$.

If that is the case and $V(I)$ is finite then one may choose well oriented
 coordinates in $\R^{n-k+1}$ and $\R^n$
such that $m(p)\neq 0$ at each $p\in V(I)$.
\end{lemma}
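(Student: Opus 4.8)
The first assertion is pointwise. By Lemma~\ref{zaleznosc}, $p\in V(I)$ if and only if $\rank[\alpha_1(p),\ldots,\alpha_k(p)]\le k-1$, and the same reasoning applied to the $(k-1)\times(k-1)$ minors shows $p\in V(I_1)$ if and only if this rank is $\le k-2$. Since each $k\times k$ minor of $[a_{ij}(x)]$ is, by Laplace expansion along a column, an $\R[x_1,\ldots,x_{n-k+1}]$-linear combination of $(k-1)\times(k-1)$ minors, we have $I\subset I_1$ and hence $V(I_1)\subset V(I)$; thus a point $p\in V(I)$ has $\rank[\alpha_1(p),\ldots,\alpha_k(p)]=k-1$ precisely when $p\notin V(I_1)$. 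The first equivalence of the lemma follows.

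For the second part write $V(I)=\{p_1,\ldots,p_N\}$ and suppose $\rank[\alpha_1(p_i),\ldots,\alpha_k(p_i)]=k-1$ for every $i$. The plan is to build the required coordinates in two steps, the first of which also relabels the frame $\alpha_1,\ldots,\alpha_k$ (equivalently, applies an orientation-preserving linear change to the $\beta$-variables in $\widetilde\alpha$). \emph{Step 1: arrange that $\alpha_1$ is the redundant vector at each $p_i$.} The kernel of $[a_{ij}(p_i)]\colon\R^k\to\R^n$ is a line; fix a spanning vector $0\ne\gamma^{(i)}$, so $\sum_j\gamma^{(i)}_j\alpha_j(p_i)=0$. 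Replacing $[a_{ij}(x)]$ by $[a_{ij}(x)]D$ with $D\in\mathrm{GL}_k(\R)$ changes the kernel generator at $p_i$ to $D^{-1}\gamma^{(i)}$, and the last $k-1$ columns of the new matrix at $p_i$ are linearly independent if and only if the first entry of $D^{-1}\gamma^{(i)}$ is nonzero, i.e. the first row $w$ of $D^{-1}$ lies outside the hyperplane $(\gamma^{(i)})^{\perp}$. These are finitely many hyperplanes in $\R^k$, so a generic $w$ misses all of them; I would complete such a $w$ to an invertible matrix $D^{-1}$, adjusting one of its other rows so that $\det D>0$. After this change, for each $i$ the $n\times(k-1)$ matrix $B_i$ consisting of the last $k-1$ columns of $[a_{ij}(p_i)]$ has rank $k-1$; call its column space $W_i\subset\R^n$.

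\emph{Step 2: choose coordinates in $\R^n$.} A linear change of coordinates in $\R^n$ replaces $[a_{ij}(x)]$ by $A[a_{ij}(x)]$ for some $A\in\mathrm{GL}_n(\R)$, and then $m(p_i)=\det(A'B_i)$, where $A'$ is the block of the first $k-1$ rows of $A$; this vanishes if and only if $A'$ does not restrict to an isomorphism on $W_i$. For each fixed $i$ the polynomial $\det(A'B_i)$ in the entries of $A$ is not identically zero (since $\rank B_i=k-1$, some $A'$ makes $A'B_i$ the identity), so $\prod_{i=1}^N\det(A'B_i)=0$ defines a proper algebraic subset of the space of matrices, and a generic $A\in\mathrm{GL}_n(\R)$ has $m(p_i)\ne 0$ for all $i$ simultaneously; if $\det A<0$, replace $A$ by $RA$, where $R$ is the reflection in the last coordinate of $\R^n$ --- this makes $\det(RA)>0$ and negates the $n$-th row only, which does not enter $m$, so each $m(p_i)$ is unchanged. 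Finally, the values $m(p_i)$ do not depend on the parametrisation of $\R^{n-k+1}$, so any orientation-preserving change of coordinates there is admissible as well; this unused freedom is what the statement retains. This would complete the proof of the lemma.

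The step carrying the content is Step 1. A change of coordinates in the target $\R^n$ mixes the rows of $[a_{ij}(x)]$ but cannot change which \emph{columns} are linearly dependent, so making the specific minor $m$, which omits the first column, non-vanishing at every point of $V(I)$ forces one first to arrange --- by the linear recombination of the frame above --- that $\alpha_1$ is the redundant vector at each such point. Once that is done, Step 2 and the orientation adjustments are routine genericity and bookkeeping arguments over the finite set $V(I)$.
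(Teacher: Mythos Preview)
The paper omits the proof of this lemma entirely (it is marked with a bare $\Box$), so there is no argument to compare against. Your reasoning is essentially sound, but two points deserve comment.

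First, the lemma as printed contains a typo: the condition should read $V(I)\cap V(I_1)=\emptyset$, not $V(I)\setminus V(I_1)=\emptyset$. As written, $V(I)\setminus V(I_1)=\emptyset$ says $V(I)\subset V(I_1)$, i.e.\ every point of $V(I)$ has rank at most $k-2$, the opposite of what is wanted. Your argument in fact proves the corrected version (rank $k-1$ at $p\in V(I)$ iff $p\notin V(I_1)$, hence rank $k-1$ everywhere on $V(I)$ iff $V(I)\cap V(I_1)=\emptyset$); your sentence ``the first equivalence follows'' silently repairs rather than derives the stated condition, and it would be worth flagging the discrepancy.

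Second, and more substantively, you are right that the literal hypothesis of the second part is too weak: a coordinate change in $\R^{n-k+1}$ does not alter the matrix values $[a_{ij}(p)]$ at points of $V(I)$, and left-multiplication by $A\in\mathrm{GL}_n(\R)$ preserves any linear relation among the \emph{columns}, so if $\alpha_2(p),\ldots,\alpha_k(p)$ happen to be dependent at some $p\in V(I)$ then no admissible change can make $m(p)\neq 0$. Your Step~1, recombining the frame by a right-multiplication $D\in\mathrm{GL}_k(\R)$ (equivalently, an orientation-preserving linear change of the $\beta$-coordinates on $S^{k-1}$), is the missing ingredient the paper presumably intends; it is harmless for everything that follows since $\deg(\widetilde\alpha)$ is unaffected. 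With that amendment your genericity arguments in Steps~1 and~2 are correct, and the orientation bookkeeping (choosing $\det D>0$, and the reflection $R$ in the $n$-th coordinate of $\R^n$, which leaves rows $1,\ldots,k-1$ and hence $m$ untouched) is cleanly handled.
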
\hspace*{\fill}$\Box$\par\medskip

From now on we assume that  $m(p)\neq 0$ at each $p\in V(I)$. Hence, if 
$\alpha_1(p), \ldots ,\alpha_k(p)$ are linearly dependent
then $\alpha_2(p), \ldots ,\alpha_k(p)$ are linearly independent. 
 In that case there exists  a uniquely determined 
$\bar{\lambda}=(\bar{\lambda}_2,\ldots , \bar{\lambda}_k)\in \R^{k-1}$ 
such that $\alpha_1(p)+\bar{\lambda}_2\alpha_2(p)+\ldots +\bar{\lambda}_k\alpha_k(p)=0$.
Therefore we have

\begin{lemma}\label{jednoznacznosc}
Suppose that $p\in V(I)$. Then there is a uniquely  determined 
$\bar{\beta}\in H_+=S^{k-1}\cap \{\beta_1>0\}$ such that 
$\widetilde{\alpha}(\bar{\beta},p)=\bar{\beta}_1\alpha_1(p)+\bar{\beta}_2\alpha_2(p)+\ldots +\bar{\beta}_k\alpha_k(p)=0$,
and $\bar{\lambda}_2=\bar{\beta}_2/\bar{\beta}_1,\ldots,\bar{\lambda}_k=
\bar{\beta}_k/\bar{\beta}_1$.
\end{lemma}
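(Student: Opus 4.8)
The plan is to reduce everything to the linear-algebra fact already assembled in the preceding three lemmas. Fix $p\in V(I)$. By Lemma \ref{zaleznosc1} (under the standing assumption $m(p)\neq 0$) the vectors $\alpha_2(p),\ldots,\alpha_k(p)$ are linearly independent, while $\alpha_1(p),\ldots,\alpha_k(p)$ are linearly dependent by Lemma \ref{zaleznosc}. So in the dependence relation $c_1\alpha_1(p)+\cdots+c_k\alpha_k(p)=0$ the coefficient $c_1$ cannot vanish (otherwise $\alpha_2(p),\ldots,\alpha_k(p)$ would be dependent), and after dividing by $c_1$ we obtain $\alpha_1(p)+\bar\lambda_2\alpha_2(p)+\cdots+\bar\lambda_k\alpha_k(p)=0$ with uniquely determined $\bar\lambda_2,\ldots,\bar\lambda_k$; uniqueness is again the linear independence of $\alpha_2(p),\ldots,\alpha_k(p)$. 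This is exactly the paragraph preceding the statement, so I would only need to repackage it.

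Next I would pass from $\bar\lambda$ to the claimed $\bar\beta\in S^{k-1}$. Set $t=1/\sqrt{1+\bar\lambda_2^2+\cdots+\bar\lambda_k^2}>0$ and put $\bar\beta=(t,t\bar\lambda_2,\ldots,t\bar\lambda_k)$. Then $|\bar\beta|=1$ by construction, $\bar\beta_1=t>0$ so $\bar\beta\in H_+$, and multiplying the dependence relation by $t$ gives $\bar\beta_1\alpha_1(p)+\cdots+\bar\beta_k\alpha_k(p)=\widetilde\alpha(\bar\beta,p)=0$. Moreover $\bar\beta_j/\bar\beta_1=\bar\lambda_j$ for $j=2,\ldots,k$, which is the last assertion.

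For uniqueness of $\bar\beta$ in $H_+$: if $\beta\in H_+$ also satisfies $\widetilde\alpha(\beta,p)=0$, then $\beta_1\alpha_1(p)=-(\beta_2\alpha_2(p)+\cdots+\beta_k\alpha_k(p))$, and $\beta_1>0$ forces $\beta_1\neq 0$ (indeed if $\beta_1=0$ the remaining $\beta_j$ cannot all vanish since $|\beta|=1$, contradicting independence of $\alpha_2(p),\ldots,\alpha_k(p)$). Dividing by $\beta_1$ produces a dependence relation of the same normalized shape, so by the uniqueness of $\bar\lambda$ we get $\beta_j/\beta_1=\bar\lambda_j=\bar\beta_j/\bar\beta_1$ for all $j\geq 2$, hence $\beta$ is a positive multiple of $\bar\beta$; since both lie on $S^{k-1}$, $\beta=\bar\beta$.

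There is essentially no obstacle here: the statement is a bookkeeping corollary of Lemmas \ref{zaleznosc}, \ref{zaleznosc1} and the standing hypothesis $m(p)\neq 0$, the only mild point being to check that the normalization $\beta_1>0$ genuinely pins down a single point of the sphere, which is the uniqueness argument above. I would therefore write the proof in three short steps — existence and uniqueness of $\bar\lambda$, construction of $\bar\beta$ from $\bar\lambda$, uniqueness of $\bar\beta$ — each one or two lines.
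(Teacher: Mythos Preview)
Your proposal is correct and matches the paper's approach: the paper states this lemma without proof (marked only with a box), relying on the paragraph immediately preceding it, which establishes the existence and uniqueness of $\bar\lambda$ exactly as you do. Your write-up simply fills in the routine passage from $\bar\lambda$ to $\bar\beta\in H_+$ and the uniqueness check on the sphere, which the paper leaves to the reader.
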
\hspace*{\fill}$\Box$\par\medskip

Note  that 
$$H_+\ni (\beta_1,\beta_2,\ldots, \beta_k)\mapsto (\beta_2/\beta_1,\ldots,\beta_k/\beta_1)\in \R^{k-1}$$ 
is an orientation  preserving diffeomorphism. For $\lambda=(\lambda_2,\ldots ,\lambda_k)\in \R^{k-1}$ 
and $1\leq i\leq n$
we define  $F_i(\lambda, x)=a_{i1}(x)+\lambda_2a_{i2}(x)+\ldots +\lambda_ka_{ik}(x)$. Then 
$$F=(F_1,\ldots , F_n)=\left [a_{ij}(x)\right ]\left [\begin{array}{c}
1\\ \lambda_2\\
\vdots\\
\lambda_k
\end{array} \right ]:\R^{k-1}\times \R^{n-k+1}\longrightarrow\R^n.$$

\begin{lemma}\label{FiLambda}
A point $(\bar{\beta}, p)\in H_+\times \R^{n-k+1}$ is an isolated zero of 
$\widetilde{\alpha}|(H_+\times\R^{n-k+1})$ if and only if $(\bar{\lambda},p)\in \R^{k-1}\times \R^{n-k+1}$
 is  an isolated zero of $F$. If that is the case then

 $$\deg_{(\bar{\beta}, p)}(\widetilde{\alpha}|(H_+\times\R^{n-k+1}))=\deg_{(\bar{\lambda},p)}(F).$$
\end{lemma}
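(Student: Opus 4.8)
The statement relates zeros of $\widetilde\alpha|(H_+\times\R^{n-k+1})$ to zeros of $F$, and asserts that when these are isolated the local topological degrees agree. The natural approach is to exhibit a commutative diagram of maps connecting $\widetilde\alpha|(H_+\times\R^{n-k+1})$ and $F$ through the diffeomorphism introduced just before the lemma, and then argue that local degree is preserved.

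First I would make the change of variables explicit. Write $\varphi\colon H_+\to\R^{k-1}$, $\varphi(\beta_1,\dots,\beta_k)=(\beta_2/\beta_1,\dots,\beta_k/\beta_1)$, which the text has already noted is an orientation-preserving diffeomorphism; let $\psi=\varphi^{-1}$. Consider $\Phi=\psi\times\mathrm{id}\colon \R^{k-1}\times\R^{n-k+1}\to H_+\times\R^{n-k+1}$, an orientation-preserving diffeomorphism. The key computational observation is the identity
$$\widetilde\alpha(\Phi(\lambda,x))=\widetilde\alpha(\psi(\lambda),x)=\bar\beta_1\cdot F(\lambda,x),$$
where $\bar\beta_1=\bar\beta_1(\lambda)>0$ is the first coordinate of $\psi(\lambda)$, since $\psi(\lambda)=\bar\beta_1(1,\lambda_2,\dots,\lambda_k)$ by definition of $\varphi$ and $[a_{ij}(x)][\beta_1,\dots,\beta_k]^T=\beta_1\cdot[a_{ij}(x)][1,\lambda_2,\dots,\lambda_k]^T$. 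Thus $\widetilde\alpha\circ\Phi = \mu\cdot F$, where $\mu(\lambda,x)=\bar\beta_1(\lambda)$ is a smooth, strictly positive scalar function on $\R^{k-1}\times\R^{n-k+1}$.

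From this identity both halves of the lemma follow quickly. For the equivalence of isolated zeros: since $\Phi$ is a homeomorphism and $\mu>0$ everywhere, $(\widetilde\alpha\circ\Phi)^{-1}(0)=F^{-1}(0)$, and $\Phi$ maps $F^{-1}(0)$ homeomorphically onto $(\widetilde\alpha|(H_+\times\R^{n-k+1}))^{-1}(0)$, carrying the point $(\bar\lambda,p)$ to $(\bar\beta,p)$; hence one is isolated in its zero set iff the other is. For the degree equality: by the behaviour of local degree under an orientation-preserving diffeomorphism of the source, $\deg_{(\bar\beta,p)}(\widetilde\alpha|(H_+\times\R^{n-k+1}))=\deg_{(\bar\lambda,p)}(\widetilde\alpha\circ\Phi)$; and multiplying the target map by a positive smooth scalar does not change the local degree, since $\widetilde\alpha\circ\Phi$ and $F$ have the same zero set near $(\bar\lambda,p)$ and the straight-line homotopy $t\mapsto ((1-t)+t\mu)\cdot F$ keeps the zero set fixed on a small sphere around $(\bar\lambda,p)$ (the coefficient $(1-t)+t\mu$ stays positive). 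Therefore $\deg_{(\bar\lambda,p)}(\widetilde\alpha\circ\Phi)=\deg_{(\bar\lambda,p)}(F)$, and chaining the equalities gives the claim.

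The only point needing a little care — and the step I would expect to be the mild obstacle — is justifying that the two normalizations of local degree (via a bounding manifold $N$ and via $F/|F|$ on a small sphere) behave as claimed under these operations; but this is standard once one notes that local degree at an isolated zero $q$ equals $\deg(G/|G|\colon S_\varepsilon(q)\to S^{n-1})$ for small $\varepsilon$, and that both $G\mapsto G\circ\Phi$ (orientation-preserving reparametrization) and $G\mapsto \mu G$ ($\mu>0$) preserve this degree, the former because $\Phi$ restricts to an orientation-preserving diffeomorphism of small spheres up to homotopy, the latter because $G/|G|=(\mu G)/|\mu G|$ pointwise. No genuinely hard estimate is involved; the content is entirely in the factorization $\widetilde\alpha\circ\Phi=\mu\cdot F$.
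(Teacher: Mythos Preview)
Your proof is correct and follows exactly the approach the paper intends: the paper omits the proof entirely (closing the lemma with a $\Box$), relying on the immediately preceding remark that $H_+\ni\beta\mapsto(\beta_2/\beta_1,\dots,\beta_k/\beta_1)\in\R^{k-1}$ is an orientation-preserving diffeomorphism, from which the factorization $\widetilde\alpha\circ\Phi=\mu\cdot F$ with $\mu>0$ and hence the lemma are immediate. Your write-up simply makes this implicit argument explicit.
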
  \hspace*{\fill}$\Box$\par\medskip
In such case 
$$\frac{\partial (F_1,\ldots , F_{k-1})}{\partial (\lambda_2,\ldots , \lambda_k)}(\bar{\lambda},p)=m(p)\neq 0,$$
 so $\partial (F_1,\ldots , F_{k-1})/\partial (\lambda_2,\ldots , \lambda_k)\neq 0$
 in a neighbourhood of $(\bar{\lambda},p).$
 According to the Cramer rule, there exists a uniquely determined 
$\lambda(x)=(\lambda_2(x),\ldots ,\lambda_k(x))$
 defined in a neighbourhood of $p$, such that 

$$\left \{\begin{array}{c}
F_1(\lambda(x),x)=0\\
\vdots\\
F_{k-1}(\lambda(x),x)=0
\end{array}\right.$$ 
and \begin{equation}\lambda_i(x)=\frac{(-1)^{i-1}}{m(x)}\det\left[\begin{array}{ccccc}
a_{11}(x)&\ldots & \widehat{a_{1,i}(x)}&\ldots &a_{1k}(x)\\ &\ldots&&\ldots&\\
a_{k-1,1}(x)&\ldots &\widehat{a_{k-1,i}(x)}&\ldots &a_{k-1,k}(x)
\end{array} \right ]\  \label{star}\end{equation}
for $2\leq i\leq k$. 

Denote $\Gamma=\{(\lambda(x),x)\}\subset \R^{k-1}\times \R^{n-k+1}$, 
of course $\Gamma$ is an $(n-k+1)$-- manifold near $(\bar{\lambda},p)$. 
On $\Gamma$ we take the orientation induced by equations $F_1=\ldots =F_{k-1}=0$, 
i.e. vectors $v_1,\ldots, v_{n-k+1} $  tangent to 
$\Gamma$ at $(\lambda(x),x)$ are well oriented  if and only if 
$$\nabla F_1(\lambda(x),x),\ldots ,\nabla F_{k-1}(\lambda(x),x),v_1,\ldots , v_{n-k+1}$$ 
are well oriented in $\R^{k-1}\times\R^{n-k+1}$. One can see that vectors 
$$v_s=\frac{\partial}{\partial x_s}(\lambda(x),x)=
\Big(\frac{\partial \lambda_2}{\partial x_s},\ldots, 
\frac{\partial \lambda_k}{\partial x_s},0,\ldots, 1,0,\ldots 0\Big),\ 1\leq s\leq n-k+1,$$ 
with the $1$  in the $(k+s-1)$--th coordinate, are tangent to $\Gamma$. 
Their orientation corresponds to the standard orientation of an $\R^{n-k+1}$

\begin{lemma}\label{orientacja}
Vectors $v_1,\ldots, v_{n-k+1}$ are well oriented  if and only if $m(p)>0$.
\end{lemma}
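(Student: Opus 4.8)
The plan is to rephrase the orientation condition as the positivity of a single $n\times n$ determinant and then to evaluate that determinant by a Schur\,/\,block manipulation that peels off a manifestly positive scalar factor, leaving $m(p)$.

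First I would work in the coordinates $(\lambda_2,\ldots,\lambda_k,x_1,\ldots,x_{n-k+1})$ on $\R^{k-1}\times\R^{n-k+1}$ and form the $n\times n$ matrix $M$ whose consecutive rows are $\nabla F_1(\bar\lambda,p),\ldots,\nabla F_{k-1}(\bar\lambda,p),v_1,\ldots,v_{n-k+1}$; by the definition of the induced orientation on $\Gamma$, the vectors $v_1,\ldots,v_{n-k+1}$ are well oriented precisely when $\det M>0$. Since $\partial F_i/\partial\lambda_j=a_{ij}(x)$ and the $x$-part of $v_s$ is the $s$-th standard basis vector, $M$ decomposes into blocks: the top-left $(k-1)\times(k-1)$ block is $A=[a_{i,j+1}(p)]$, whose determinant is exactly $m(p)$; the bottom-right $(n-k+1)\times(n-k+1)$ block is the identity $\mathbf 1$; the top-right block $B=[\partial F_i/\partial x_s(\bar\lambda,p)]$ and the bottom-left block $C=[\partial\lambda_{j+1}/\partial x_s(p)]$ collect the obvious partial derivatives.

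Next I would bring in the defining identities $F_1(\lambda(x),x)\equiv\cdots\equiv F_{k-1}(\lambda(x),x)\equiv 0$. Differentiating in $x_s$ and using the chain rule together with $\partial F_i/\partial\lambda_j=a_{ij}$ gives, in block form, $AC^{\mathrm T}+B=0$, i.e. $B=-AC^{\mathrm T}$ along $\Gamma$. Then subtracting $B$ times the last $n-k+1$ rows of $M$ from its first $k-1$ rows turns $M$ into a block lower-triangular matrix with diagonal blocks $A-BC=A+AC^{\mathrm T}C=A(\mathbf 1+C^{\mathrm T}C)$ and $\mathbf 1$, so $\det M=\det A\cdot\det(\mathbf 1+C^{\mathrm T}C)=m(p)\cdot\det(\mathbf 1+C^{\mathrm T}C)$. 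Because $C^{\mathrm T}C$ is positive semidefinite, $\mathbf 1+C^{\mathrm T}C$ is positive definite and $\det(\mathbf 1+C^{\mathrm T}C)>0$; hence $\det M$ has the same sign as $m(p)$, which is the assertion.

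The steps above are routine linear algebra plus the chain rule, so there is no serious obstacle. The one point that actually carries the statement is to notice that the scalar correction factor $\det(\mathbf 1+C^{\mathrm T}C)$ is \emph{strictly positive} rather than merely nonzero --- this is what forces $\sgn(\det M)=\sgn m(p)$ --- so the thing to get right is to arrange the block computation (via $B=-AC^{\mathrm T}$) so that this positive factor is exposed cleanly instead of being hidden inside a direct cofactor expansion.
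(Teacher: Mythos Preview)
Your proof is correct and follows essentially the same route as the paper: set up the $n\times n$ determinant, invoke the chain-rule identity $AC^{\mathrm T}+B=0$ coming from $F_i(\lambda(x),x)\equiv 0$, and reduce to $m(p)$ times a manifestly positive factor. The only cosmetic difference is that the paper identifies the positive factor as the Gram determinant of $v_1,\ldots,v_{n-k+1}$ (that is, $\det(\mathbf 1+CC^{\mathrm T})$) rather than your $\det(\mathbf 1+C^{\mathrm T}C)$; the two coincide by Sylvester's determinant identity.
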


\begin{proof} We have $F_i(\lambda(x),x)\equiv 0$, for $1\leq i\leq k-1$,  so
 $$0\equiv\frac{\partial}{\partial x_s}F_i(\lambda(x),x)$$
$$=a_{i2}(x)\frac{\partial \lambda_2}{\partial x_s}(x)+\ldots +a_{ik}(x)\frac{\partial \lambda_k}{\partial x_s}(x)+\frac{\partial a_{i1}}{\partial x_s}(x)+\lambda_2(x)\frac{\partial a_{i2}}{\partial x_s}(x)+\ldots +\lambda_k(x)\frac{\partial a_{ik}}{\partial x_s}(x).$$
 Denote by $A$ the Gramian matrix  of vectors $v_1,\ldots ,v_{n-k+1}$. It is easy to see that
the determinant of the matrix having rows
$\nabla F_1(\lambda(x),x),\ldots ,\nabla F_{k-1}(\lambda(x),x),$   $v_1(x),\ldots ,v_{n-k+1}(x)$ equals

$$\det\left[\begin{array}{cccccc}a_{12}(x)&\ldots &a_{1k}(x)&0&\ldots&0\\\vdots&&\vdots&\vdots&&\vdots\\a_{k-1,2}(x)&\ldots &a_{k-1,k}(x)&0&\ldots&0\\ &*&&&A
\end{array}\right ]=m(x)\det A.$$ 
Vectors $v_1,\ldots ,v_{n-k+1}$ are linearly independent, so $\det A>0$. 
We get that $v_1,\ldots ,v_{n-k+1}$ are well oriented in $\Gamma$ if and only if $ m(x)>0$.
\end{proof}

Denote by $\Oo_n$  the ring of germs at $(\bar{\lambda},p)$ of analytic functions
$\R^{k-1}\times\R^{n-k+1}\longrightarrow \R$, by  $\Oo_{\Gamma}$ the ring of germs at 
$(\bar{\lambda},p)$ of analytic functions
$\Gamma\longrightarrow \R$, and by $\Oo_{n-k+1}$  the ring of germs at $p$ of analytic functions
$\R^{n-k+1}\longrightarrow \R$. Since 
$\partial (F_1,\ldots , F_{k-1})/\partial (\lambda_2,\ldots , \lambda_k)(\bar{\lambda},p)\neq 0$, 
so there  is a natural isomorphism  
$$\Oo_{\Gamma}\simeq \Oo_n/\langle F_1,\ldots , F_{k-1}\rangle .$$ 
Put $\Omega_i(x)=F_i(\lambda(x),x)\in \Oo_{n-k+1}$. Because $\Gamma$ is the graph of 
$x\mapsto \lambda(x)$, so $\Oo_{\Gamma}\simeq \Oo_{n-k+1}$ and 
$$\Oo_n/\langle F_1,\ldots ,F_{k-1},F_k,\ldots , F_n\rangle \simeq \Oo_{\Gamma} / \langle F_k,
\ldots , F_n\rangle \simeq \Oo_{n-k+1}/\langle \Omega_1,\ldots ,\Omega_n \rangle.$$ 

We have $m(p)\neq 0$, so the germ of $m$ is invertible in $\Oo_n$, $\Oo_{\Gamma}$ and $\Oo_{n-k+1}$. 
Denote by $J$ the ideal in $\Oo_{n-k+1}$ generated by all $k\times k$ minors of $\left [a_{ij}(x)\right ]$.

\begin{lemma}
$\Oo_n/\langle F_1,\ldots , F_n\rangle \simeq \Oo_{\Gamma} / \langle F_k,\ldots , F_n\rangle\simeq \Oo_{n-k+1}/J$.
\end{lemma}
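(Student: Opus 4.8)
As was observed in the paragraph preceding the statement, the isomorphisms $\Oo_n/\langle F_1,\ldots,F_n\rangle\simeq\Oo_{\Gamma}/\langle F_k,\ldots,F_n\rangle\simeq\Oo_{n-k+1}/\langle\Omega_1,\ldots,\Omega_n\rangle$ are already in hand, so the only thing left is the identity of ideals $\langle\Omega_1,\ldots,\Omega_n\rangle=J$ in $\Oo_{n-k+1}$; recall that the germ of $m$ is a unit there. Since $F_i(\lambda(x),x)\equiv0$ for $1\le i\le k-1$ by the construction of $\lambda(x)$, one has $\Omega_1=\cdots=\Omega_{k-1}=0$, hence $\langle\Omega_1,\ldots,\Omega_n\rangle=\langle\Omega_k,\ldots,\Omega_n\rangle$, and I will establish $\langle\Omega_k,\ldots,\Omega_n\rangle=J$ by proving the two inclusions separately.

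For $\langle\Omega_k,\ldots,\Omega_n\rangle\subseteq J$, fix $\ell$ with $k\le\ell\le n$ and multiply $\Omega_\ell(x)=a_{\ell1}(x)+\lambda_2(x)a_{\ell2}(x)+\cdots+\lambda_k(x)a_{\ell k}(x)$ by $m(x)$. Substituting the Cramer formulas (\ref{star}) for the products $m(x)\lambda_i(x)$, and noting that $m(x)$ is itself the $i=1$ term of the same alternating sum, one recognizes $m(x)\,\Omega_\ell(x)$, up to an overall sign, as the expansion along its last row of the $k\times k$ minor of $[a_{ij}(x)]$ formed from the rows $1,\ldots,k-1,\ell$ and all $k$ columns. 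Thus $m\,\Omega_\ell\in J$, and since $m$ is invertible in $\Oo_{n-k+1}$ we get $\Omega_\ell\in J$.

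For $J\subseteq\langle\Omega_k,\ldots,\Omega_n\rangle$, let $\mu$ be the $k\times k$ minor of $[a_{ij}(x)]$ on an arbitrary row set $S=\{i_1<\cdots<i_k\}$ and let $A_S(x)$ denote the corresponding $k\times k$ submatrix. The vector $A_S(x)\cdot(1,\lambda_2(x),\ldots,\lambda_k(x))^{T}$ has entries $\Omega_{i_1}(x),\ldots,\Omega_{i_k}(x)$, and it differs from the first column of $A_S(x)$ only by a linear combination of the columns $2,\ldots,k$; replacing the first column of $A_S(x)$ by this vector therefore leaves the determinant unchanged, still equal to $\mu$. Expanding the resulting determinant along its first column gives $\mu=\sum_{j=1}^{k}(-1)^{j+1}\Omega_{i_j}(x)\,D_j(x)$, where each $D_j(x)$ is, up to sign, a $(k-1)\times(k-1)$ minor of $[a_{ij}(x)]$ on the rows $S\setminus\{i_j\}$ and columns $2,\ldots,k$. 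Since $\Omega_{i_j}=0$ whenever $i_j\le k-1$, only the indices $j$ with $i_j\ge k$ contribute, and for those $\Omega_{i_j}\in\langle\Omega_k,\ldots,\Omega_n\rangle$; hence $\mu\in\langle\Omega_k,\ldots,\Omega_n\rangle$. The two inclusions yield $\langle\Omega_1,\ldots,\Omega_n\rangle=J$, which together with the isomorphisms above proves the lemma.

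The argument is entirely formal: no transversality or degree theory is needed here, only commutative algebra. The single point that genuinely requires attention is to be sure that the displayed relations are honest identities in $\Oo_{n-k+1}$ rather than equalities holding merely at generic points — but this is guaranteed because $m(p)\neq0$ makes $\lambda(x)$ analytic near $p$ and validates (\ref{star}), after which cofactor expansion and the invariance of a determinant under adding a multiple of one column to another are genuine polynomial identities. The only remaining labour is the sign bookkeeping in the two cofactor expansions, which is where I would expect a slip to occur but which is otherwise routine.
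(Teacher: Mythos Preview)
Your proof is correct and follows essentially the same approach as the paper's: both arguments reduce to showing $\langle\Omega_1,\ldots,\Omega_n\rangle=J$ in $\Oo_{n-k+1}$ by proving the two inclusions, one via the identity $m\,\Omega_\ell=(-1)^{k-1}\Delta_\ell$ (the $k\times k$ minor on rows $1,\ldots,k-1,\ell$), the other via the column-replacement/Cramer observation that an arbitrary $k\times k$ minor expands as a $\Oo_{n-k+1}$-linear combination of the $\Omega_{i_j}$. The only cosmetic difference is that the paper handles the inclusions in the opposite order and phrases the second one as an appeal to ``Cramer's rule'' rather than spelling out the column substitution.
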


\begin{proof}
Take $1\leq i_1<\ldots <i_k\leq n$. Of course 
$\Omega_{i_1}=\ldots =\Omega_{i_k}=0$ in $\Oo_{n-k+1}/\langle \Omega_1,\ldots , \Omega_n\rangle$, 
i.e.
$$\left \{\begin{array}{c}
a_{i_1,1}(x)+\lambda_2(x)a_{i_1,2}(x)+\ldots +\lambda_k(x)a_{i_1,k}(x)=0\\
\vdots\\
a_{i_k,1}(x)+\lambda_2(x)a_{i_k,2}(x)+\ldots +\lambda_k(x)a_{i_k,k}(x)=0
\end{array}\right.$$ 
According to the Cramer rule, 
$$\det\left [\begin{array}{ccc}
a_{i_1,1}(x)&\ldots &a_{i_1,k}(x)\\ &\ldots& \\
a_{i_k,1}(x)&\ldots &a_{i_k,k}(x)
\end{array} \right ]=0\ \mbox{ in }\ \Oo_{n-k+1}/\langle \Omega_1,\ldots,\Omega_n\rangle .$$
 Each generator of $J$ belongs to $\langle \Omega_1,\ldots , \Omega_n\rangle$, 
so $J\subset \langle \Omega_1,\ldots , \Omega_n\rangle$.

Applying  (\ref{star}) one may show 
$$\Omega_i(x)=\frac{(-1)^{k-1}}{m(x)}\det\left[\begin{array}{ccc}
a_{11}(x)&\ldots &a_{1k}(x) \\
 &\ldots& \\
a_{k-1,1}(x)&\ldots &a_{k-1,k}(x)\\
a_{i1}(x)&\ldots &a_{ik}(x)
\end{array} \right ]=:\frac{(-1)^{k-1}}{m(x)}\Delta_i(x).$$
In particular,  germs of $\Omega_1,\ldots, \Omega_n$ in $\Oo_{n-k+1}$ belong to $J$. 
Of course $\Omega_1 \equiv 0,\ldots,\Omega_{k-1}\equiv 0$.
So $\langle \Omega_1,\ldots ,\Omega_n\rangle=\langle  \Omega_k,\ldots,\Omega_n \rangle=J$, and  there are natural isomorphisms   
$$\Oo_n/\langle F_1,\ldots , F_n\rangle \simeq \Oo_{\Gamma} / \langle F_k,\ldots , 
F_n\rangle\simeq \Oo_{n-k+1}/\langle \Omega_1,\ldots ,\Omega_n \rangle\simeq\Oo_{n-k+1}/J.$$ \end{proof}
From the previous proof we also get 

\begin{lemma}
$\Oo_{n-k+1}/J\simeq \Oo_{n-k+1}/\langle \Delta_k,\ldots,\Delta_n\rangle$.

\end{lemma}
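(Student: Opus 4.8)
The plan is to read the identity off directly from the computation carried out in the proof of the preceding lemma. There it was shown that $J=\langle \Omega_1,\ldots,\Omega_n\rangle$ as ideals in $\Oo_{n-k+1}$, and that each $\Omega_i$ satisfies $\Omega_i(x)=\frac{(-1)^{k-1}}{m(x)}\Delta_i(x)$. First I would observe that since $m(p)\neq 0$, the germ of $m$ at $p$ is a unit in the local ring $\Oo_{n-k+1}$, hence so are $(-1)^{k-1}/m$ and its inverse $(-1)^{k-1}m$. Because multiplying a generating set of an ideal by units does not alter the ideal, the relations $\Omega_i=\frac{(-1)^{k-1}}{m}\Delta_i$ and $\Delta_i=(-1)^{k-1}m\,\Omega_i$ give $\langle \Omega_1,\ldots,\Omega_n\rangle=\langle \Delta_1,\ldots,\Delta_n\rangle$.

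Next I would discard the first $k-1$ generators on the right-hand side. For $1\le i\le k-1$ the matrix whose determinant defines $\Delta_i$ has rows indexed by $1,\ldots,k-1,i$, so two of its rows coincide and $\Delta_i\equiv 0$ as a polynomial; alternatively, this follows from $\Omega_1\equiv\cdots\equiv\Omega_{k-1}\equiv 0$, already recorded in the previous proof, together with $\Delta_i=(-1)^{k-1}m\,\Omega_i$. Hence $\langle\Delta_1,\ldots,\Delta_n\rangle=\langle\Delta_k,\ldots,\Delta_n\rangle$. Combining this with the isomorphism $\Oo_{n-k+1}/J\simeq\Oo_{n-k+1}/\langle\Omega_1,\ldots,\Omega_n\rangle$ from the previous lemma yields $\Oo_{n-k+1}/J\simeq\Oo_{n-k+1}/\langle\Delta_k,\ldots,\Delta_n\rangle$.

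Since each step is a one-line verification, there is no genuine obstacle here; the only point worth stating carefully is that $(-1)^{k-1}/m$ is truly invertible in $\Oo_{n-k+1}$, which is exactly the standing hypothesis $m(p)\neq 0$ reinterpreted at the level of germs at $p$.
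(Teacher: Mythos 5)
Your proposal is correct and follows exactly the route the paper intends: the paper gives no separate argument, simply noting that the identity $J=\langle\Omega_1,\ldots,\Omega_n\rangle$ together with $\Omega_i=\frac{(-1)^{k-1}}{m}\Delta_i$ (with $m$ a unit in $\Oo_{n-k+1}$) and the vanishing $\Delta_1=\cdots=\Delta_{k-1}=0$ from the previous proof yields $J=\langle\Delta_k,\ldots,\Delta_n\rangle$. You have merely made these one-line verifications explicit, including the correct observation that $\Delta_i$ has a repeated row for $i\leq k-1$.
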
\hspace*{\fill}$\Box$\par\medskip

\begin{lemma}\label{stopienF}
If $n-k$ is even and $\partial (\Delta_k,\ldots , \Delta_n)/\partial (x_1,\ldots ,x_{n-k+1})(p)\neq 0$, 
then $$\deg_{(\bar{\lambda},p)}(F)=(-1)^{k-1}\sgn\frac{\partial (\Delta_k,\ldots ,
 \Delta_n)}{\partial (x_1,\ldots ,x_{n-k+1})}(p).$$

\end{lemma}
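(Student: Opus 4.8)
The plan is to trace the local degree of $F$ at $(\bar\lambda,p)$ through the two isomorphisms of analytic local rings established just above — $\Oo_n/\langle F_1,\dots,F_n\rangle \simeq \Oo_\Gamma/\langle F_k,\dots,F_n\rangle \simeq \Oo_{n-k+1}/\langle \Omega_1,\dots,\Omega_n\rangle \simeq \Oo_{n-k+1}/\langle \Delta_k,\dots,\Delta_n\rangle$ — and to account for each orientation factor that those isomorphisms introduce. First I would note that, since $F_1,\dots,F_{k-1}$ have nonvanishing Jacobian $\partial(F_1,\dots,F_{k-1})/\partial(\lambda_2,\dots,\lambda_k)(\bar\lambda,p)=m(p)\neq 0$, the map $F$ restricted to the first $k-1$ coordinates is a local diffeomorphism, so by the standard product/composition rule for local topological degree $\deg_{(\bar\lambda,p)}(F)$ equals $\sgn(m(p))$ times the local degree at $(\bar\lambda,p)$ of the restriction $(F_k,\dots,F_n)|_\Gamma$, where $\Gamma=\{(\lambda(x),x)\}$ is the solution manifold of $F_1=\dots=F_{k-1}=0$ with its boundary/coorientation orientation. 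This is the only analytic input; everything else is bookkeeping with signs.

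Next I would use that $\Gamma$ is the graph $x\mapsto(\lambda(x),x)$, so $(F_k,\dots,F_n)|_\Gamma$ is identified with $(\Omega_k,\dots,\Omega_n):\R^{n-k+1}\to\R^{n-k}$ — wait, the target has dimension $n-k$ while the domain has dimension $n-k+1$, so one must be careful: the correct statement is that the germ ring of the common zero set is $\Oo_{n-k+1}/\langle\Omega_1,\dots,\Omega_n\rangle$, and since $\Omega_1\equiv\dots\equiv\Omega_{k-1}\equiv 0$ identically, this is $\Oo_{n-k+1}/\langle\Omega_k,\dots,\Omega_n\rangle$; here there are exactly $n-k+1$ functions $\Omega_k,\dots,\Omega_n$ of $n-k+1$ variables, so $(\Omega_k,\dots,\Omega_n):\R^{n-k+1}\to\R^{n-k+1}$ and its local degree at $p$ is well defined. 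So the key reduction is $\deg_{(\bar\lambda,p)}(F) = \sgn(m(p))\cdot\deg_p(\Omega_k,\dots,\Omega_n)$, where the parametrization of $\Gamma$ by $x$ carries the induced orientation of $\Gamma$ into $\sgn(m(p))$ times the standard orientation of $\R^{n-k+1}$ by Lemma \ref{orientacja}; combining, the sign $\sgn(m(p))$ cancels the reorientation and $\deg_{(\bar\lambda,p)}(F)=\deg_p(\Omega_k,\dots,\Omega_n)$ with $\R^{n-k+1}$ in its standard orientation.

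Then, using the identity $\Omega_i(x) = \frac{(-1)^{k-1}}{m(x)}\Delta_i(x)$ from the preceding proof, I would compare $\deg_p(\Omega_k,\dots,\Omega_n)$ with $\deg_p(\Delta_k,\dots,\Delta_n)$. Multiplying a component by a nowhere-zero factor $1/m(x)$ of constant sign $\sgn(m(p))$ near $p$ does not change the local degree; doing this to each of the $n-k$ components $k,\dots,n$ contributes $\sgn(m(p))^{\,n-k} = 1$ since $n-k$ is even. The overall scalar $(-1)^{k-1}$ applied to all $n-k$ components contributes $((-1)^{k-1})^{\,n-k} = 1$, again because $n-k$ is even. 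Hence $\deg_p(\Omega_k,\dots,\Omega_n) = \deg_p(\Delta_k,\dots,\Delta_n)$. Finally, under the hypothesis $\partial(\Delta_k,\dots,\Delta_n)/\partial(x_1,\dots,x_{n-k+1})(p)\neq 0$, the map $(\Delta_k,\dots,\Delta_n)$ is a local diffeomorphism at $p$ and its local degree equals the sign of this Jacobian. Putting the chain together gives $\deg_{(\bar\lambda,p)}(F) = (-1)^{k-1}\,\sgn\,\frac{\partial(\Delta_k,\dots,\Delta_n)}{\partial(x_1,\dots,x_{n-k+1})}(p)$ — so I should double-check where the leftover factor $(-1)^{k-1}$ in the claimed formula actually enters.

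The main obstacle, and the place I would be most careful, is precisely this last discrepancy: I have been getting cancellation of $(-1)^{k-1}$ from the $n-k$ even exponent, yet the statement retains one factor of $(-1)^{k-1}$. Reexamining, the resolution is that the coorientation/product rule for $\deg_{(\bar\lambda,p)}(F)$ in terms of $\deg$ of the restriction to $\Gamma$ is not sign-free: peeling off the first $k-1$ equations $F_1,\dots,F_{k-1}$ and using Lemma \ref{orientacja} produces the factor $\sgn(m(p))$ once for the orientation of $\Gamma$ relative to the $x$-parametrization, but the identification of $(F_k,\dots,F_n)|_\Gamma$ with $(\Omega_k,\dots,\Omega_n)$ via $\Omega_i=(-1)^{k-1}\Delta_i/m$ must be organized so that only an odd number — in fact one — of the $(-1)^{k-1}$ factors survives. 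The honest way to pin this down is to compute the degree via the bilinear-form/Eisenbud–Levine approach or, more elementarily, to choose coordinates making all the relevant maps diffeomorphisms and multiply out the Jacobians explicitly: $\det D F(\bar\lambda,p)$ block-decomposes, after row operations reflecting $F_i = a_{i1}+\sum_j\lambda_j a_{ij}$ and the definition of $\Delta_i$ as the determinant with the $(k-1)\times k$ block $[a_{rj}]_{r\le k-1}$ on top and the row $[a_{ij}]_j$ appended, into the factor $m(p)$ from the $\lambda$-block times a determinant that, upon expanding, equals $(-1)^{k-1}$ times $\det\big(\partial(\Delta_k,\dots,\Delta_n)/\partial x\big)/m(p)^{?}$. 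Carrying out that determinant expansion carefully — matching the position of the appended row $a_{ij}$ among the fixed rows $a_{1j},\dots,a_{k-1,j}$, which is the source of the sign $(-1)^{k-1}$ — is the one genuinely computational step, and it is where I would spend the bulk of the write-up.
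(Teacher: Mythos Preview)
Your approach is essentially the paper's, but two concrete sign--bookkeeping errors are the source of the lingering $(-1)^{k-1}$ discrepancy, and the final paragraph does not actually locate them. First, the reduction $\deg_{(\bar\lambda,p)}F=\deg_{(\bar\lambda,p)}\big((F_k,\dots,F_n)|_\Gamma\big)$ holds \emph{without} any prefactor $\sgn(m(p))$ when $\Gamma$ carries the coorientation induced by $F_1=\dots=F_{k-1}=0$; this is exactly the content of \cite[Lemma~3.2]{Szafraniec}. The factor $\sgn(m(p))$ enters only once, in passing from that coorientation to the standard $x$--parametrization via Lemma~\ref{orientacja}, giving $\deg_{(\bar\lambda,p)}F=\sgn(m(p))\cdot\deg_p(\Omega_k,\dots,\Omega_n)$ rather than the sign--free identity you obtain.

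Second, and this is the decisive slip: there are $n-k+1$ functions $\Omega_k,\dots,\Omega_n$, not $n-k$. Since $n-k$ is even, the relevant exponent $n-k+1$ is \emph{odd}, so scaling each $\Omega_i$ by $(-1)^{k-1}/m(x)$ contributes $\big((-1)^{k-1}\big)^{n-k+1}\cdot\sgn(m(p))^{n-k+1}=(-1)^{k-1}\sgn(m(p))$ to the degree, not $1$. Hence
\[
\deg_p(\Omega_k,\dots,\Omega_n)=(-1)^{k-1}\sgn(m(p))\,\sgn\frac{\partial(\Delta_k,\dots,\Delta_n)}{\partial(x_1,\dots,x_{n-k+1})}(p),
\]
and combining with the previous step the two factors $\sgn(m(p))$ cancel while the single $(-1)^{k-1}$ survives. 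This is precisely how the paper argues (it computes $\partial\Omega_i/\partial x_s\equiv (-1)^{k-1}m^{-1}\partial\Delta_i/\partial x_s$ modulo $J$ and takes the $(n-k+1)\times(n-k+1)$ Jacobian); once you correct the count, no further ``careful determinant expansion'' is needed.
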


\begin{proof} We have $\Omega_i(x)m(x)=(-1)^{k-1}\Delta_i(x)$ so 

$$\frac{\partial\Omega_i}{\partial x_s}m+\Omega_i\frac{\partial m}{\partial x_s}=
(-1)^{k-1}\frac{\partial \Delta_i}{\partial x_s}.$$ 
As $\Omega_i\in J$, then 
$$\frac{\partial\Omega_i}{\partial x_s}\equiv \frac{(-1)^{k-1}}{m}\frac{\partial\Delta_i}{\partial x_s}\ \mod J.$$ 
Because $n-k$ is even, so 
$$\frac{\partial (\Omega_k,\ldots , \Omega_n)}{\partial (x_1,\ldots ,
x_{n-k+1})}\equiv \frac{(-1)^{k-1}}{m^{n-k+1}}\frac{\partial (\Delta_k,\ldots , \Delta_n)}
{\partial (x_1,\ldots ,x_{n-k+1})}\ \mod J.$$ 
Since $\partial (\Delta_k,\ldots , \Delta_n)/\partial (x_1,\ldots ,x_{n-k+1})(p)\neq 0$, 
the mapping $(\Omega_k,\ldots ,\Omega_n)$ has an isolated regular zero at $p$, moreover 
$$\deg_p(\Omega_k,\ldots,\Omega_n)=
(-1)^{k-1}\sgn (m(p))\sgn\frac{\partial (\Delta_k,\ldots , \Delta_n)}{\partial (x_1,\ldots ,x_{n-k+1})}(p).$$
So the local topological degree of 
$(F_k,\ldots ,F_n):(\Gamma,(\bar{\lambda},p))\longrightarrow (\R^{n-k+1},0)$ at  $(\bar{\lambda},p)$ equals 
$$(-1)^{k-1}\sgn (m(p))\sgn\frac{\partial (\Delta_k,\ldots , \Delta_n)}{\partial (x_1,\ldots ,x_{n-k+1})}(p),$$ 
if and only if the orientation of $\Gamma$ is the same as the one induced from $\R^{n-k+1}$. 
According to Lemma \ref{orientacja}, the local topological degree of 
$(F_k,\ldots ,F_n):(\Gamma,(\bar{\lambda},p))\longrightarrow (\R^{n-k+1},0)$ , 
where the orientation of $\Gamma$ is the one induced by equations $F_1=\ldots=F_{k-1}=0$, equals 
$$(-1)^{k-1}\sgn\frac{\partial (\Delta_k,\ldots , \Delta_n)}{\partial (x_1,\ldots ,x_{n-k+1})}(p).$$ 
According to \cite[Lemma 3.2]{Szafraniec}, $(\bar{\lambda}, p)$ is isolated in 
$((F_k,\ldots ,F_n)|\Gamma)\inv(0)$ if and only if 
$(\bar{\lambda}, p)$ is isolated in 
$(F_1,\ldots ,F_n)\inv(0)$, moreover 
$$\deg_{(\bar{\lambda}, p)}(F_k,\ldots ,F_n)|\Gamma=\deg_{(\bar{\lambda}, p)}(F_1,\ldots ,F_n).$$ 
Hence the local topological degree of  
$F=(F_1,\ldots, F_{k-1},F_{k},\ldots,F_n):(\R^{k-1}\times \R^{n-k+1},(\bar{\lambda}, p))\longrightarrow
(\R^n,0)$ at $(\bar{\lambda}, p)$ equals 
$$(-1)^{k-1}\sgn\frac{\partial (\Delta_k,\ldots , \Delta_n)}{\partial (x_1,\ldots ,x_{n-k+1})}(p).$$
\end{proof}

Put $\Aa=\R[x_1,\ldots,x_{n-k+1}]/I$. Let us assume that $\dim\Aa<\infty$, so that $V(I)$ is finite.
For $h\in\Aa$, we denote by $T(h)$ the trace of the linear endomorphism
$\Aa\ni a\mapsto h\cdot a\in\Aa$. Then $T:\Aa\rightarrow\R$ is a linear functional.

Let $f\in\R[x_1,\ldots,x_{n-k+1}]$ and $M=f^{-1}(0)$. Assume that $D=\{x\ |\ f(x)\geq 0\}$ is bounded and
$\nabla f(x)\neq 0$ at each $x\in M$. Then $D$ is a compact manifold with boundary
$\partial D=M$, and $\dim M=n-k$. 

Put $\delta=\partial(\Delta_k,\ldots,\Delta_n)/\partial(x_1,\ldots,x_{n-k+1})$. With $f$ and $\delta$ we associate
quadratic forms $\Theta_\delta,\ \Theta_{f\cdot \delta}:\Aa\rightarrow\R$
given by $\Theta_\delta(a)=T(\delta\cdot a^2)$ and $\Theta_{f\cdot \delta}(a)=T(f\cdot\delta\cdot a^2)$.
According to \cite{becker,pedersenetal}, we have
$$\operatorname{signature}\, \Theta_\delta=\sum \operatorname{sgn}(\delta(p)),$$
$$\operatorname{signature}\, \Theta_{f\cdot \delta}=\sum \operatorname{sgn}(f(p)\delta(p)),$$
where $p\in V(I)$. Moreover, if the quadratic forms are non-degenerate then
$\delta(p)\neq 0$ and $f(p)\neq 0$ at each $p\in V(I)$, so that $V(I)\cap M=\emptyset$.
In that case vectors $\alpha_1(x),\ldots,\alpha_k(x)$ are linearly independent
at every $x\in M$, and then the restricted mapping $\alpha|M$ goes into $\widetilde{V}_k(\R^n)$.
Hence $\widetilde{\alpha}|S^{k-1}\times M$ goes into $\R^n\setminus\{0\}$.

\begin{theorem}\label{efeektywnie} If $n-k$ is even, 
$\alpha=(\alpha_1,\ldots ,\alpha_k):\R^{n-k+1}\longrightarrow M_k(\R^n)$ 
is a polynomial mapping such that $\dim\Aa<\infty$, 
$I+\langle m \rangle=\R[x_1,\ldots ,x_{n-k+1}]$ and quadratic forms 
$\Theta_\delta,\, \Theta_{f\cdot\delta}:\Aa\longrightarrow\R$ are non--degenerate then 
$$\Lambda(\alpha|M)=\frac{1}{2}\deg(\widetilde{\alpha}|S^{k-1}\times M)=
\frac{1}{2}(\signature\Theta_{\delta}+\signature\Theta_{f\cdot\delta}),$$ 
where $\widetilde{\alpha}(\beta,x)=\beta_1\alpha_1(x)+\ldots +\beta_k\alpha_k(x)$.

\end{theorem}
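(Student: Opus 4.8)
The plan is to compute the topological degree $\deg(\widetilde{\alpha}|S^{k-1}\times M)$ by counting, with signs, the preimages of a well-chosen regular value, and to identify that count with the two signatures via the trace-form results of Becker--Cardinal--Roy and Pedersen--Roy--Szpirglas cited in the excerpt. First I would observe that since $\Theta_\delta$ and $\Theta_{f\cdot\delta}$ are assumed non-degenerate, $\delta(p)\neq 0$ and $f(p)\neq 0$ at every $p\in V(I)$; combined with $I+\langle m\rangle=\R[x_1,\ldots,x_{n-k+1}]$ this means $m(p)\neq 0$ on $V(I)$, so all the local analysis of Section 3 (Lemmas \ref{jednoznacznosc}--\ref{stopienF}) applies at each point of $V(I)$, and moreover $V(I)\cap M=\emptyset$, so $\widetilde{\alpha}|S^{k-1}\times M$ indeed maps to $\R^n\setminus\{0\}$ and its degree is defined. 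The first equality $\Lambda(\alpha|M)=\tfrac12\deg(\widetilde{\alpha}|S^{k-1}\times M)$ is just the definition of $\Lambda$ together with Theorem \ref{parzystoscstopnia}, so the whole content is the second equality.

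The key geometric step is to reduce the degree of $\widetilde{\alpha}|S^{k-1}\times M$ to a sum of local degrees of the auxiliary map $F$ over the finitely many points of $V(I)$. I would argue as follows: $D=\{f\geq 0\}$ is a compact manifold with boundary $M$, so $S^{k-1}\times D$ is a compact manifold with boundary $S^{k-1}\times M$, and $\widetilde{\alpha}$ is defined on all of $S^{k-1}\times D$ with zero set exactly $\{(\bar\beta,p):p\in V(I)\cap D,\ \bar\beta\in S^{k-1},\ \widetilde{\alpha}(\bar\beta,p)=0\}$; by Lemma \ref{zaleznosc} and the fact that $m\neq 0$ on $V(I)$, for each such $p$ there are exactly two such $\bar\beta$, namely $\bar\beta\in H_+$ and $-\bar\beta\in H_-$. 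Hence $\deg(\widetilde{\alpha}|S^{k-1}\times M)=\sum_{p}\big(\deg_{(\bar\beta,p)}\widetilde{\alpha}+\deg_{(-\bar\beta,p)}\widetilde{\alpha}\big)$. Using the free involution $\beta\mapsto-\beta$ together with $\deg(-\mathrm{id}_{S^{n-1}})=(-1)^n=1$ (since $n-k$ even, $k>1$ forces... actually one must track parity of $n$ here) and the orientation behaviour of the involution on $S^{k-1}$, the two contributions at $p$ and $-p$... rather, at $(\bar\beta,p)$ and $(-\bar\beta,p)$ are equal, exactly as in the proof of Proposition \ref{parzystosc}; this is precisely why the degree is twice something. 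Therefore $\tfrac12\deg(\widetilde{\alpha}|S^{k-1}\times M)=\sum_{p\in V(I)}\deg_{(\bar\beta,p)}(\widetilde{\alpha}|H_+\times\R^{n-k+1})$. Here $D$ can be replaced by $\R^{n-k+1}$ since $V(I)\subset D$; one should check $V(I)\subset\inter D$, which holds because $V(I)\cap M=\emptyset$.

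Next, by Lemma \ref{FiLambda} each local degree $\deg_{(\bar\beta,p)}(\widetilde{\alpha}|H_+\times\R^{n-k+1})$ equals $\deg_{(\bar\lambda,p)}(F)$, and by Lemma \ref{stopienF}, since $\delta(p)\neq 0$, this equals $(-1)^{k-1}\operatorname{sgn}\delta(p)$. Summing, $\tfrac12\deg(\widetilde{\alpha}|S^{k-1}\times M)=(-1)^{k-1}\sum_{p\in V(I)}\operatorname{sgn}\delta(p)$. Finally I would split the sum over $V(I)$ according to the sign of $f(p)$: writing $V(I)=\{p:f(p)>0\}\sqcup\{p:f(p)<0\}$ (recall $f(p)\neq 0$ on $V(I)$), one has $\tfrac12(\operatorname{sgn}\delta(p)+\operatorname{sgn}(f(p)\delta(p)))=\operatorname{sgn}\delta(p)$ when $f(p)>0$ and $=0$ when $f(p)<0$, so $\sum_{p\in V(I),\,f(p)>0}\operatorname{sgn}\delta(p)=\tfrac12\big(\sum_{p\in V(I)}\operatorname{sgn}\delta(p)+\sum_{p\in V(I)}\operatorname{sgn}(f(p)\delta(p))\big)=\tfrac12(\signature\Theta_\delta+\signature\Theta_{f\cdot\delta})$ by the cited trace-form formulas. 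It remains to note that $D=\{f\geq 0\}$ selects exactly the points of $V(I)$ with $f(p)\geq 0$, i.e. $f(p)>0$, so $\sum_{p\in V(I)\cap D}\operatorname{sgn}\delta(p)=\sum_{p\in V(I),\,f(p)>0}\operatorname{sgn}\delta(p)$, which combined with the previous paragraph gives the theorem.

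The main obstacle I anticipate is the orientation bookkeeping in the reduction $\deg(\widetilde{\alpha}|S^{k-1}\times M)=2\sum_p\deg_{(\bar\beta,p)}(\widetilde{\alpha}|H_+\times\R^{n-k+1})$: one must verify that the contributions from $(\bar\beta,p)\in H_+$ and $(-\bar\beta,p)\in H_-$ genuinely coincide (not merely differ by a sign), which requires combining the orientation-reversal of $\beta\mapsto-\beta$ on $S^{k-1}$ (a factor $(-1)^k$), the effect of $-\mathrm{id}$ on $S^{n-1}$ (a factor $(-1)^n$), and—crucially—the hypothesis that $n-k$ is even so that $(-1)^{n+k}=1$; this is exactly the computation carried out in the proof of Proposition \ref{parzystosc} and in the surjectivity proof, so I would invoke that argument rather than redo it. A secondary subtlety is making sure one may excise to a neighbourhood of each zero $(\bar\beta,p)$ inside $S^{k-1}\times D$ and that no zeros escape to the boundary, which follows from $V(I)\cap M=\emptyset$ and compactness of $D$.
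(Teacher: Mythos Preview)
Your overall strategy matches the paper's: fill in $S^{k-1}\times D$, count zeros of $\widetilde{\alpha}$ there, halve via the involution $\beta\mapsto-\beta$, pass to $F$ via Lemma~\ref{FiLambda}, evaluate each local degree via Lemma~\ref{stopienF}, and identify the resulting sign-sum with the two signatures. However, there is a genuine gap.

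The claim ``$V(I)\subset D$'' is false in general, and your justification ``which holds because $V(I)\cap M=\emptyset$'' only shows that no point of $V(I)$ lies on $\partial D=M$; points $p\in V(I)$ with $f(p)<0$ lie \emph{outside} $D$ and do not contribute to $\deg(\widetilde{\alpha},S^{k-1}\times D,0)$. Consequently your intermediate formula $\tfrac12\deg(\widetilde{\alpha}|S^{k-1}\times M)=(-1)^{k-1}\sum_{p\in V(I)}\sgn\delta(p)$ is wrong; the sum must run over $V(I)\cap D$ only. This is not cosmetic: the whole point of introducing the second form $\Theta_{f\cdot\delta}$ is precisely to filter out the points of $V(I)$ with $f<0$. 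If $V(I)\subset D$ were automatic, one would simply have $\signature\Theta_\delta$ alone and no need for $\Theta_{f\cdot\delta}$. Once you keep the sum over $V(I)\cap D$ throughout, your final paragraph (the trace-form identity $\signature\Theta_\delta+\signature\Theta_{f\cdot\delta}=2\sum_{p\in V(I)\cap D}\sgn\delta(p)$) plugs in correctly and the argument closes exactly as in the paper.

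A secondary omission: you never account for the sign relating the product orientation on $S^{k-1}\times M$ to the boundary orientation it inherits from $S^{k-1}\times D$. The paper observes these agree if and only if $k$ is odd, giving $\deg(\widetilde{\alpha}|S^{k-1}\times M)=(-1)^{k-1}\deg(\widetilde{\alpha},S^{k-1}\times D,0)$. This factor is distinct from the involution bookkeeping you flag at the end, and you need it to track the $(-1)^{k-1}$ in the final formula.
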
 

\begin{proof}
The product $S^{k-1}\times D$ is a compact $n$--manifold with boundary
$\partial(S^{k-1}\times D)=S^{k-1}\times M$. The standard orientation of the boundary
coincides with the standard orientation of the product $S^{k-1}\times M$ if and only if $k$ is odd.
Then
$$\deg(\widetilde{\alpha}|S^{k-1}\times M)=(-1)^{k-1}\deg(\widetilde{\alpha},S^{k-1}\times D,0).$$

Take $(\bar{\beta},p)\in\widetilde{\alpha}^{-1}(0)\cap S^{k-1}\times D$.
Then $\alpha_1(p),\ldots,\alpha_k(p)$ are linearly dependent.
By Lemma \ref{zaleznosc}, $p$ belongs to a finite set $V(I)$. Since $I+\langle m\rangle=\R[x_1,\ldots,x_{n-k+1}]$,
we have $m(p)\neq 0$ at each $p\in V(I)$. Then,
by Lemma \ref{jednoznacznosc}, $(\bar{\beta},p)$ and $(-\bar{\beta},p)$ are isolated in
$\widetilde{\alpha}^{-1}(0)\cap S^{k-1}\times D$ and the first coordinate
$\bar{\beta}_1\neq 0$.

Because $n-k$ is even, then
$$\deg_{(\bar{\beta},p)}\widetilde{\alpha}=  
\deg(-id|S^{n-1})\cdot\deg_{(-\bar{\beta},p)}\widetilde{\alpha}\cdot \deg(-id|S^{k-1})=
\deg_{(-\bar{\beta},p)}\widetilde{\alpha}.$$
Hence $\deg(\widetilde{\alpha},S^{k-1}\times D,0)=2\deg(\widetilde{\alpha},H_+\times D,0)$. 
By Lemma \ref{FiLambda}, 
$$\deg(\widetilde{\alpha},H_+\times D,0)=\deg(F,\R^{k-1}\times D,0)=\sum\deg_{(\bar{\lambda},p)} F,$$
where $(\bar{\lambda},p)\in F^{-1}(0)\cap \R^{k-1}\times D$.

The quadratic form $\Theta_\delta$ is non-degenerate, hence
$\delta(p)\neq 0$ at each $p\in V(I)$. By Lemma \ref{stopienF},
$$\deg(\widetilde{\alpha},S^{k-1}\times D,0)=2(-1)^{k-1}\sum\sgn \delta(p),$$
where $p\in V(I)\cap D=V(I)\cap\{f>0\}$.

On the other hand,
$$\signature\Theta_\delta+\signature\Theta_{f\cdot \delta}$$
$$=\sum_{p\in V(I)}\sgn\delta(p)+\left(   \sum_{p\in V(I)\cap\{f>0\}}\sgn\delta(p)-    \sum_{p\in V(I)\cap\{f<0\}}\sgn\delta(p)         \right)$$
$$=2\sum_{p\in V(I)\cap D}\sgn\delta(p).$$

\end{proof}

\begin{example}
Take $$A=\left[\begin{array}{cc}
2z+2&y+2\\2y+1&2y+1\\2x+1&y+2\\z+1&2y+1

\end{array}\right].$$ Let $\alpha=(\alpha_1,\alpha_2):\R^3\longrightarrow M_2(\R^4)$ be a polynomial mapping such that $\alpha_j$ is the $j$--th column of $A$. 

One may check that $I$ is generated by $2y-z,$ $2x-2z-1,$ $z^2+z$ and $\Aa=\R[x,y,z]/I$ is a $2$--dimensional algebra, where $e_1=1$, $e_2=z$ is its basis. 

In our case $m=y+2$, and so $I+\langle m\rangle=\R[x,y,z].$ One may check that $T(e_1)=2$, $T(e_2)=-1$, and $\delta=-24-\frac{75}{2}z$, $f\cdot \delta =-18-\frac{45}{4}z$ in $\Aa$. The matrices of $\Theta_{\delta}$ and $\Theta_{f\cdot \delta}$ are 
$$\left[\begin{array}{cc}
-{21}/{2}&-{27}/{2}\\
-{27}/{2}&{27}/{2}
\end{array}\right] \ \mbox{ and }\ \left[\begin{array}{cc}
-{99}/{4}&{27}/{4}\\
{27}/{4}&-{27}/{4}
\end{array}\right].$$
Hence $\signature \Theta_{\delta}=0$, $\signature \Theta_{f\cdot \delta}=-2$. Applying  Theorem \ref{efeektywnie} we get that $\alpha|S^2:S^2\longrightarrow \widetilde{V}_2(\R^4)$ and $\Lambda(\alpha|S^2)=(0-2)/2=-1.$
\end{example}


\section{Intersection number}
\label{sec:2}

By $B^n(r)$ we denote the $n$--dimensional open ball with radius $r$ centred at the origin,
 by $\bar{B}^n(r)$ its closure,
 and by $S^{n-1}(r)$ the $(n-1)$--dimensional sphere with radius $r$ centred at the origin.

\begin{lemma}\label{stopienNaSferze}
Let $H=(h_1,\ldots ,h_n):\R^n\longrightarrow \R^n$ be continuous. If 
$Z=\{x\in \R^n|\ h_1(x)=\ldots =h_k(x)=0\}$ is compact for some $k<n$, 
then the topological degree of $H/|H|:S^{n-1}(R)\longrightarrow S^{n-1}$ 
is equal to zero for any $R>0$ with $Z\subset B^n(R).$
\end{lemma}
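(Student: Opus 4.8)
The statement asserts that if some $k$ of the $n$ component functions of $H$ already have compact common zero set $Z$, then the full map $H/|H|$ restricted to a large sphere enclosing $Z$ has degree zero. The plan is to exhibit an explicit null-homotopy, or rather to reduce the degree to that of a map that manifestly fails to be surjective. The key point is that on $S^{n-1}(R)$ the zero set $Z$ of $(h_1,\dots,h_k)$ is contained in the interior ball $B^n(R)$, so $(h_1,\dots,h_k)$ never vanishes simultaneously on the sphere itself; this gives room to deform the first $k$ coordinates without creating new zeros.

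\emph{Step 1: deform the last $n-k$ coordinates to zero.} Consider the homotopy $H_t = (h_1,\dots,h_k,\, t h_{k+1},\dots,t h_n)$ for $t\in[0,1]$. For this to be a valid homotopy of maps $S^{n-1}(R)\to\R^n\setminus\{0\}$ (equivalently, into $S^{n-1}$ after normalization) I need $H_t(x)\neq 0$ for all $x\in S^{n-1}(R)$ and all $t$. But $H_t(x)=0$ would force $h_1(x)=\dots=h_k(x)=0$, i.e. $x\in Z\subset B^n(R)$, contradicting $|x|=R$. So $H_t/|H_t|$ is a well-defined homotopy on $S^{n-1}(R)$, and $\deg(H/|H|) = \deg(H_0/|H_0|)$ where $H_0 = (h_1,\dots,h_k,0,\dots,0)$.

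\emph{Step 2: observe the limiting map is not surjective.} The map $H_0/|H_0| : S^{n-1}(R)\to S^{n-1}$ has image contained in the subsphere $\{y\in S^{n-1} : y_{k+1}=\dots=y_n=0\}$, which is a copy of $S^{k-1}$ with $k-1 < n-1$. Hence $H_0/|H_0|$ is not onto, so it misses a point of $S^{n-1}$ and is therefore null-homotopic; its degree is zero. Combining with Step 1, $\deg(H/|H|)=0$.

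\emph{Main obstacle.} There is no serious obstacle here; the only thing to be careful about is the very first verification that $H_t$ stays away from the origin on the sphere — this is exactly where the hypothesis $Z\subset B^n(R)$ (strict containment in the \emph{open} ball, so that $Z\cap S^{n-1}(R)=\emptyset$) is used, and it must be invoked explicitly. One should also note that $R>0$ with $Z\subset B^n(R)$ always exists because $Z$ is compact, and that the argument is independent of which such $R$ is chosen, consistent with the homotopy invariance of the degree of $H/|H|$ on spheres enclosing the relevant zero set. A one-line alternative phrasing: since the composite $S^{n-1}(R)\xrightarrow{H} \R^n\setminus\{0\}\to S^{n-1}$ factors, up to homotopy rel nothing, through the inclusion $S^{k-1}\hookrightarrow S^{n-1}$ of a lower-dimensional sphere, it is inessential.
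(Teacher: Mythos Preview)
Your proof is correct. Both you and the paper ultimately use the same idea---that a non-surjective map $S^{n-1}\to S^{n-1}$ has degree zero---but the paper reaches it in one step rather than two: it observes directly that $(H/|H|)^{-1}(0,\ldots,0,1)\subset Z\cap S^{n-1}(R)=\emptyset$, since any point mapping to $(0,\ldots,0,1)$ must have $h_1=\cdots=h_{n-1}=0$, hence in particular $h_1=\cdots=h_k=0$. Your homotopy $H_t$ is valid but unnecessary: the original map $H/|H|$ already misses $(0,\ldots,0,1)$ for exactly the reason you invoke in Step~1, so Step~2 applies to $H$ itself without first deforming the last $n-k$ coordinates to zero. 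Your argument does make the geometry slightly more vivid (the image retracts into a lower-dimensional equatorial sphere), at the cost of an extra step.
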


\begin{proof} Suppose that $Z\subset B^n(R)$, so $Z\cap S^{n-1}(R)=\emptyset$ and 
$H(x)\neq 0$ for $x\in S^{n-1}(R)$. Let us consider 
$H/|H|:S^{n-1}(R)\longrightarrow S^{n-1}$. We have 
$$ (H/|H|)\inv(0,\ldots ,0,1)\subset Z\cap S^{n-1}(R)=\emptyset.$$ 
So $(H/|H|)\inv(0,\ldots ,0,1)=\emptyset$, and the degree of $H/|H|$ equals zero.

\end{proof}

Let $H=(h_1,\ldots ,h_n):\R^n\longrightarrow \R^n$ be continuous, 
let $U\subset\R^n$ be an open set such that 
$H\inv (0)\cap  U$ is compact, so that the  topological degree 
$\deg(H,U,0)$ is defined. Suppose that $h_1(x)=x_1g(x)$, where 
$g(x)>0$ for $x\in U$. By $U'$ denote  the set 
$\{x'=(x_2,\ldots ,x_n)\in\R^{n-1}\ |\ (0,y')\in U\}$. Of course 
$H\inv (0)\cap U\subset \{0\}\times U'$. 
Let us define the mapping $H':\R^{n-1}\longrightarrow\R^{n-1}$  by 
$H'(x')=(h_2(0,x'),\ldots ,h_n(0,x'))$. 
Then $(H')^{-1} (0)\cap  U'$ is compact and $\deg(H',U',0)$ is well defined.
We have

\begin{lemma}\label{stopien}
$\deg(H,U,0)=\deg(H',U',0)$.
\end{lemma}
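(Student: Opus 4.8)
The plan is to reduce the computation of $\deg(H,U,0)$ to that of $\deg(H',U',0)$ by a straight-line homotopy that deforms the first coordinate $h_1(x) = x_1 g(x)$ into $x_1$ while keeping everything else fixed, and then to peel off the first coordinate via the standard restriction-to-a-hyperplane formula for the topological degree. Throughout I will keep in mind that $H^{-1}(0) \cap U \subset \{0\}\times U'$, since $h_1(x) = x_1 g(x)$ with $g > 0$ on $U$ forces $x_1 = 0$ on the zero set.

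First I would replace $h_1$ by $x_1$. Define $H_t(x) = (t x_1 + (1-t) x_1 g(x),\, h_2(x)\kropek h_n(x))$ for $t\in[0,1]$; equivalently $H_t(x) = (x_1\bigl(t + (1-t)g(x)\bigr), h_2(x)\kropek h_n(x))$. Since $g(x) > 0$ on $U$, the factor $t + (1-t)g(x)$ is strictly positive on $U$ for every $t\in[0,1]$, so the first coordinate of $H_t$ vanishes on $U$ exactly when $x_1 = 0$, and hence $H_t^{-1}(0)\cap U = H^{-1}(0)\cap U$ is independent of $t$ and in particular compact for all $t$. By homotopy invariance of the topological degree (choosing a compact manifold with boundary $N\subset U$ containing $H^{-1}(0)\cap U$ in its interior, as in the setup preceding Lemma \ref{stopienNaSferze}), $\deg(H,U,0) = \deg(\widehat H, U, 0)$ where $\widehat H(x) = (x_1, h_2(x)\kropek h_n(x))$.

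Next I would compute $\deg(\widehat H, U, 0)$ by reducing the dimension. Pick a regular value $y = (0, y_2\kropek y_n)$ of $\widehat H/|\widehat H|$ near $(0\kropek 0, 1)$, or more directly work with $\widehat H$ itself near $0$: the preimage $\widehat H^{-1}(0)\cap U$ lies in $\{x_1 = 0\}$, and on that hyperplane $\widehat H$ restricts to $(0, h_2(0,x')\kropek h_n(0,x')) = (0, H'(x'))$. The first coordinate function $x_1$ has derivative $dx_1 = (1,0\kropek 0)$ everywhere, which is transverse to the hyperplane $\{x_1 = 0\}$ with the correct (positive) sign, so the excision/restriction property of the Brouwer degree gives $\deg(\widehat H, U, 0) = \deg(H', U', 0)$; one may either invoke this as a standard fact or argue it at the level of a common regular value, noting that the local index of $\widehat H$ at an isolated zero $(0,p)$ equals $\sgn(\partial x_1/\partial x_1)\cdot$ (local index of $H'$ at $p$) $= $ (local index of $H'$ at $p$), and summing. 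This is in fact the same mechanism used to pass from $\widetilde\alpha$ to $F$ in Lemma \ref{FiLambda} and in the reduction arguments of Section \ref{efektywnie}, specialized to the single coordinate $\beta_1 \leftrightarrow x_1$.

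The main obstacle is the bookkeeping in the dimension-reduction step: making precise that $\deg(\widehat H, U, 0) = \deg(H', U', 0)$ requires either citing the appropriate product/restriction formula for the degree or carrying out a small perturbation so that all zeros become regular, and then checking that the orientation conventions match so no spurious sign $(-1)$ appears. Since the splitting-off coordinate is literally $x_1$ and the relevant Jacobian block is the $1\times 1$ identity, the sign works out trivially, but it should be stated cleanly. Everything else — compactness of the various zero sets, positivity of $t + (1-t)g$, homotopy invariance — is routine given the hypotheses and the framework already set up in the excerpt.
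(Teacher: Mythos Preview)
Your argument is correct. Note that the paper itself omits the proof entirely (the lemma is followed immediately by the end-of-proof box), treating it as a standard fact about the Brouwer degree; your homotopy from $x_1 g(x)$ to $x_1$ followed by the block-triangular Jacobian computation at regular zeros is exactly the routine verification one would supply, so there is nothing further to compare.
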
 \hspace*{\fill}$\Box$\par\medskip

Let us assume that
$f :\R ^{m+1} \longrightarrow \R $
is a smooth function such that
$M=f^{-1}(0)$ is compact  
and  $\nabla f(p)\neq 0$ at each $p\in M$, 
so that $M$ is an $m$-dimensional manifold. We shall say that vectors
$v_1, \ldots ,v_m\in T_pM$ are well oriented if vectors $\nabla
f(p),v_1,\ldots ,v_m$ are well oriented in
$\R^{m+1}$. In this way $M$ is an oriented manifold.

Let
$G=(g_1,\ldots,g_{2m}) :\R^ {m+1} \longrightarrow \R
^{2m}$
be smooth. 
Put $g=G|M$ and
 define
$H:\R^{m+1}\times
\R^{m+1}\longrightarrow \R^{2+2m}$ by
$$H(x,y)=(f(x),f(y),g_1(x)-g_1(y),\ldots
,g_{2m}(x)-g_{2m}(y)).$$
According to \cite[Lemma 18, Proposition 20]{KarNowSzafr} we have

\begin{proposition}\label{nasze}
The mapping $g:M\longrightarrow \R^{2m}$ is an immersion if and only if
the mapping $\R^{m+1}\ni x\mapsto (f(x),g_1(x),\ldots, g_{2m}(x))$ has rank $m+1$
at each $p\in M$.

If that is the case then there exists a compact
$2(m+1)$-dimensional manifold with boundary $N\subset\R^{m+1}\times\R^{m+1}$
such that 
$$\{(x,y)\in\R^{m+1}\times\R^{m+1}\ |\ H(x,y)=0, x\neq y\}\subset N\setminus\partial N.$$

If $m$ is even, then for any such $N$ the intersection number $I(g)$ equals
$\deg(H,N,0)/2= \deg(H|\partial N)/2.$ 
\end{proposition}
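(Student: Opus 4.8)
\medskip\noindent\emph{Proof proposal.} The plan has four stages.

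\emph{Stage 1 (the immersion criterion).} This is pure linear algebra. Since $\nabla f(p)\neq0$ we have $T_pM=\ker df_p$. Writing $\Phi(x)=(f(x),g_1(x),\ldots,g_{2m}(x))$, the image $d\Phi_p(\R^{m+1})$ projects onto the $f$--coordinate, and the kernel of that projection inside the image is $\{0\}\times dg_p(T_pM)$; hence $\rank d\Phi_p=1+\rank dg_p$. Thus $\rank d\Phi_p=m+1$ at every $p\in M$ is equivalent to $dg_p:T_pM\to\R^{2m}$ being injective at every $p\in M$, i.e. to $g$ being an immersion.

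\emph{Stage 2 (existence of $N$).} Here $H^{-1}(0)=\{(x,y)\in M\times M\mid g(x)=g(y)\}$, which is closed in the compact set $M\times M$, hence compact, and contains the diagonal $\Delta_M$. Since $g$ is an immersion it is locally injective, so some neighbourhood of $\Delta_M$ meets $H^{-1}(0)$ only along $\Delta_M$; therefore the double--point set $D=\{(x,y)\mid H(x,y)=0,\ x\neq y\}$ is compact and, once $M\subset B^{m+1}(R)$, lies in the open set $(B^{m+1}(R)\times B^{m+1}(R))\setminus\{x=y\}$. A compact regular neighbourhood $N$ of $D$ inside that set has $D\subset N\setminus\partial N$, $H^{-1}(0)\cap N=D$ and $H^{-1}(0)\cap\partial N=\emptyset$, so $\deg(H,N,0)=\deg(H\mid\partial N)$ is defined.

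\emph{Stage 3 (the diagonal contributes nothing, for any admissible $N$).} Because $\Delta_M\subset H^{-1}(0)$ is disjoint from $\partial N$, the set $\Delta_M\cap N$ is a disjoint union of closed $m$--manifolds lying in $\inter(N)$. Cut $N$ into a thin closed tubular neighbourhood $N_1$ of $\Delta_M\cap N$ and the remaining piece $N_2$; then $D\subset\inter(N_2)$ and $H\neq0$ on their common boundary, so $\deg(H,N,0)=\deg(H,N_1,0)+\deg(H,N_2,0)$. Near a point of $\Delta_M$ the rank condition lets me take $f$ together with $m$ of the $g_i$ as local coordinates; in the induced coordinates on $\R^{m+1}\times\R^{m+1}$ exactly $m+2$ components of $H$ become the normal coordinates $w$ of $\Delta_M$, while the other $m$ are $O(|w|)$. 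Hence on $\partial N_1$ the first $m+2$ components of $H$ never all vanish, so $H/|H|$ misses the subsphere $\{w=0\}\subset S^{2m+1}$ and, being non--surjective, has degree $0$ (the argument of Lemma~\ref{stopienNaSferze}). So $\deg(H,N,0)=\deg(H,N_2,0)$ depends only on $D$ and is the same for every admissible $N$.

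\emph{Stage 4 (reduction to general position and local computation).} Both $I(g)$ (Whitney, \cite{whitneySelfInter}) and $\deg(H,N,0)$ are invariant under homotopies of $g$ through immersions — for the degree because such a homotopy yields a homotopy $H_t$ whose double--point sets $D_t$ all lie in a fixed compact subset of $\inter(N)$ (uniform local injectivity over $[0,1]\times M$) — so I may assume $g$ has only finitely many transverse double points. At an ordered double pair $(x_0,y_0)$, transversality means $dg_{x_0}(T_{x_0}M)\oplus dg_{y_0}(T_{y_0}M)=\R^{2m}$, which is exactly the condition that $dH_{(x_0,y_0)}$ be an isomorphism; expanding $\det dH_{(x_0,y_0)}$ in the oriented bases $(\nabla f(x_0),v_1,\ldots,v_m)$ and $(\nabla f(y_0),w_1,\ldots,w_m)$ leaves, up to the positive factor $|\nabla f(x_0)|^2\,|\nabla f(y_0)|^2$, the sign of $\det[\,dg_{x_0}(v_1),\ldots,dg_{x_0}(v_m),dg_{y_0}(w_1),\ldots,dg_{y_0}(w_m)\,]$, i.e. Whitney's local intersection sign $\varepsilon(x_0,y_0)$. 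Since $m$ is even, $\varepsilon(y_0,x_0)=(-1)^{m^2}\varepsilon(x_0,y_0)=\varepsilon(x_0,y_0)$, so the two ordered pairs over one geometric double point contribute equally, and $\deg(H,N,0)=\sum_{\mathrm{ordered}}\varepsilon=2\sum_{\mathrm{unordered}}\varepsilon=2I(g)$, whence $I(g)=\deg(H,N,0)/2=\deg(H\mid\partial N)/2$. The main obstacle is Stage 3 — showing cleanly that the diagonal part of $H^{-1}(0)$ contributes zero for an \emph{arbitrary} admissible $N$ — together with the orientation bookkeeping in Stage 4; the remaining points are routine.
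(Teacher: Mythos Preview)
The paper does not prove this proposition at all; it is quoted from \cite[Lemma 18, Proposition 20]{KarNowSzafr} and followed immediately by a $\Box$. So there is nothing in the paper to compare against, and your direct argument is welcome. Stages 1, 2, and 4 are correct: the rank identity $\rank d\Phi_p=1+\rank(dg_p|_{T_pM})$ is exactly right; your $N$ in Stage 2, built inside the complement of the diagonal, is the natural choice; and the local computation at a transverse double point, together with $\varepsilon(y_0,x_0)=(-1)^{m^2}\varepsilon(x_0,y_0)=\varepsilon(x_0,y_0)$, yields $\deg(H,N,0)=2I(g)$ for $N$ disjoint from $\Delta_M$.

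Stage 3, however, is not just sketchy --- its conclusion is false. Your local argument correctly finds, near each $p\in\Delta_M$, some $m+2$ components of $H$ that serve as normal coordinates to $\Delta_M$; but \emph{which} $m+2$ components depends on $p$ (namely on which $m$ of the $g_i$ complete $f$ to a chart), so there is no \emph{fixed} subsphere of $S^{2m+1}$ that $H/|H|$ misses on all of $\partial N_1$, and the mechanism of Lemma~\ref{stopienNaSferze} does not apply globally. Indeed the diagonal contribution is \emph{not} zero: the proof of Theorem~\ref{immersje} in this very paper computes, for a tubular neighbourhood $K$ of $\Delta_M$,
\[
\deg(H,K,0)=\deg(H|\partial K)=\deg(\widetilde{\alpha}\,|\,S^m\times M)=2\Lambda(\alpha)=-2I(g),
\]
which is nonzero whenever $I(g)\neq 0$. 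Hence the formula $I(g)=\deg(H,N,0)/2$ actually \emph{fails} for an $N$ that swallows $\Delta_M$ (it would give $0$). The phrase ``for any such $N$'' must therefore be read as ``any $N$ with $H^{-1}(0)\cap N=D$'', i.e.\ $N$ disjoint from the diagonal --- which is how it is used in the proof of Theorem~\ref{immersje}. Under that reading the independence from $N$ is immediate by excision, Stage 3 becomes superfluous, and with it removed your proof is complete.
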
\hspace*{\fill}$\Box$\par\medskip

From now on we assume that $g=G|M$ is an immersion. Then  

$$\rank\left [
\begin{array}{ccc}
\frac{\partial f}{\partial x_1}(x)&\ldots &\frac{\partial f}{\partial x_{m+1}}(x)\\
\frac{\partial g_1}{\partial x_1}(x)&\ldots &\frac{\partial g_1}{\partial x_{m+1}}(x)\\
&\ldots&\\
\frac{\partial g_{2m}}{\partial x_1}(x)&\ldots &\frac{\partial g_{2m}}{\partial x_{m+1}}(x)
\end{array} \right ]=m+1,$$ 
for $x\in M$. Denote by $\alpha_1(x), \ldots ,\alpha_{m+1}(x)$ the columns of the matrix above. 
This way with the  immersion $g$ we can associate 
$\alpha=(\alpha_1,\ldots ,\alpha_{m+1}):M\longrightarrow \widetilde{V}_{m+1}(\R^{2m+1})$, and 
$\widetilde{\alpha}=\beta_1\alpha_1(x)+\ldots + \beta_{m+1}\alpha_{m+1}(x):\R^{m+1}\times M\longrightarrow \R^{2m+1}$
such that $\widetilde{\alpha}|S^m\times M$ goes into $\R^{2m+1}\setminus \{0\}$.
By Theorem \ref{parzystoscstopnia}, the degree $\deg(\widetilde{\alpha}|S^m\times M)$ is well defined
and even.

Let us define $\phi:\R^{m+2}\times \R^{m+1}\longrightarrow \R^{2m+2}$ by 
$$\phi(\beta,\beta_{m+2};x)=(x+\beta,x+\beta_{m+2}\nabla f(x)),$$ 
where $\beta=(\beta_1,\ldots ,\beta_{m+1})$, $x=(x_1,\ldots ,x_{m+1})$. 

\begin{lemma}
For any $r>0$ small enough,  $\phi:B^{m+2}(r)\times M\rightarrow\R^{2m+2}$ is an orientation preserving diffeomorphism
onto its image.

\end{lemma}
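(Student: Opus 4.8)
The plan is to verify directly that $\phi$ is a local diffeomorphism near each point $(0,0;p)$ with $p\in M$, and then upgrade this to a global diffeomorphism onto its image on a small tube by a standard compactness argument. First I would compute the derivative of $\phi$ at a point $(0,0;p)$, $p\in M$. Writing $\phi(\beta,\beta_{m+2};x)=(x+\beta,\ x+\beta_{m+2}\nabla f(x))$, the partials are: $\partial\phi/\partial\beta_i=(e_i,0)$ for $1\le i\le m+1$; $\partial\phi/\partial\beta_{m+2}=(0,\nabla f(x))$; and $\partial\phi/\partial x_s=(e_s,\ e_s+\beta_{m+2}\,\partial(\nabla f)/\partial x_s)$, which at $\beta_{m+2}=0$ is $(e_s,e_s)$. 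However $x$ is constrained to $M$, so the relevant tangent directions are $v\in T_pM$, contributing $(v,v)$. Thus on the $(2m+2)$-dimensional source $B^{m+2}(r)\times M$ (coordinates $\beta\in\R^{m+1}$, $\beta_{m+2}\in\R$, and $m$ tangent directions on $M$), the image of the differential at $(0,0;p)$ is spanned by the columns $(e_i,0)$ ($i=1,\dots,m+1$), $(0,\nabla f(p))$, and $(v,v)$ for $v\in T_pM$.

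Next I would check this collection is linearly independent in $\R^{m+1}\times\R^{m+1}=\R^{2m+2}$. Suppose $\sum a_i(e_i,0)+c(0,\nabla f(p))+(w,w)=0$ with $w\in T_pM$. The second component gives $c\,\nabla f(p)+w=0$; since $w\perp\nabla f(p)$ (as $w\in T_pM$ and $\nabla f(p)\ne 0$), taking inner product with $\nabla f(p)$ forces $c=0$, hence $w=0$, and then the first component gives all $a_i=0$. So $d\phi_{(0,0;p)}$ is an isomorphism, and by the inverse function theorem $\phi$ is a local diffeomorphism near $(0,0;p)$. The orientation statement is then a determinant sign check: with $M$ carrying the orientation for which $\nabla f(p),v_1,\dots,v_m$ is positive (as fixed in the text just above), and $B^{m+2}(r)$ its standard orientation, one computes the determinant of the matrix with columns $(e_1,0),\dots,(e_{m+1},0),(0,\nabla f(p)),(v_1,v_1),\dots,(v_m,v_m)$ against the standard orientation of $\R^{2m+2}$; row-reducing the bottom block using the top block replaces $(v_j,v_j)$ by $(0,v_j)$, leaving a block-triangular determinant equal to $\det[e_1,\dots,e_{m+1}]\cdot\det[\nabla f(p),v_1,\dots,v_m]$, which is positive precisely because of our choice of orientation on $M$. (I would absorb any fixed global sign, if it turned out to be $-1$, by the convention — but the computation as set up gives $+1$.)

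Finally, to pass from "local diffeomorphism at every point of the compact set $\{0\}\times\{0\}\times M$" to "diffeomorphism onto its image on $B^{m+2}(r)\times M$ for small $r$", I would invoke the standard tubular-neighbourhood/injectivity lemma: $\phi$ restricted to $\{0\}\times\{0\}\times M$ is injective (it is just $p\mapsto(p,p)$), it is an immersion on a neighbourhood of that set by the previous paragraph, and $M$ is compact; hence there is $r>0$ such that $\phi|B^{m+2}(r)\times M$ is an injective immersion between manifolds of the same dimension, i.e. a diffeomorphism onto an open subset of $\R^{2m+2}$. This is a routine application of the tube lemma: were it false, one would get sequences $(u_j;p_j)\ne(u_j';p_j')$ with $u_j,u_j'\to 0$, $\phi$-images equal, and after passing to convergent subsequences $p_j,p_j'\to p\in M$, contradicting local injectivity at $(0,0;p)$.

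The main obstacle is nothing deep — it is the orientation bookkeeping in the middle step: one must be careful that the ordering of the coordinates on the product $B^{m+2}(r)\times M$ and the induced orientation on $M$ (the one fixed in the paragraph preceding the lemma) are used consistently, so that the sign of the Jacobian determinant genuinely comes out $+1$ rather than $(-1)^{m}$ or similar. Everything else — the rank computation, the inverse function theorem, and the compactness argument — is standard.
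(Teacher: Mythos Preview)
Your proposal is correct and follows essentially the same approach as the paper: both compute the derivative of $\phi$ at points with $\beta_{m+2}=0$, obtain the column vectors $(e_i,0)$, $(0,\nabla f(p))$, $(v_j,v_j)$, and reduce the Jacobian determinant to $\det[\nabla f(p),v_1,\dots,v_m]>0$, then invoke compactness of $M$. You spell out the linear-independence check and the tube-lemma injectivity argument more explicitly than the paper, which simply asserts the conclusion in one sentence, but the substance is the same.
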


\begin{proof}  Take a well oriented basis $v_1,\ldots , v_m$ of 
$T_xM$, so that $\nabla f(x),v_1,\ldots ,v_m$ are well oriented in $\R^{m+1}$. 
Let $e_1,\ldots ,e_{m+2}$ be the standard basis of $\R^{m+2}$. Take 
$q=(\beta,\beta_{m+2};x)\in B^{m+2}(r)\times M$.
Then 
$$(e_1,0),\ldots ,(e_{m+2},0),(0,v_1),\ldots ,(0,v_m)$$ 
is a  well oriented basis in $T_q (B^{m+2}(r)\times M)$. 
If $\beta_{m+2}=0$ then 
$$\det\big(\left[ D\phi((\beta,0;x))\right]\left[(e_1,0),\ldots ,(e_{m+2},0),(0,v_1),\ldots ,(0,v_m)\right]\Big)$$
$$=\det\left[\begin{array}{ccccccc}
1&\ldots&0&0&1&\ldots &0\\
&\ddots&&0&&\ddots&\\
0&\ldots&1&0&0&\ldots&1\\
0&\ldots&0&\frac{\partial f}{\partial x_1}(x)&1&\ldots &0\\
&\ddots&&\vdots&&\ddots&\\
0&\ldots&0&\frac{\partial f}{\partial x_{m+1}}(x)&0&\ldots&1\\
\end{array}\right]\left[(e_1,0),\ldots ,(e_{m+2},0),(0,v_1),\ldots ,(0,v_m)\right]$$
$$=\det\left[\begin{array}{ccccccc}
1&\ldots&0&0&&&\\
&\ddots&&0&&*&\\
0&\ldots&1&0&&&\\
0&\ldots&0&\nabla f(x)&v_1&\ldots &v_m\\

\end{array}\right]=\det\left[\nabla f(x),v_1,\ldots ,v_m\right]>0.$$ 
Since $\phi(0;x)=(x,x)$ and $M$ is compact,
if $r>0$ is small enough then $\phi:B^{m+2}(r)\times M\rightarrow\R^{2m+2}$ is an orientation preserving
diffeomorphism onto its image.
\end{proof}


\begin{lemma}\label{suma}
There exist smooth functions $u_1,\ldots ,u_{m+1}:\R^{m+1}\times \R^{m+1}\longrightarrow\R$ 
such that $f(x+y)=f(x)+\sum_1^{m+1}y_i u_i(x,y)$, and $u_i(x,0)=\frac{\partial f}{\partial x_i}(x)$.
\end{lemma}\hspace*{\fill}$\Box$\par\medskip

\begin{theorem}\label{immersje}
If $m$ is even and $g:M\rightarrow\R^{2m}$ is an immersion then
$I(g)=-\deg (\widetilde{\alpha}|S^m\times M)/2=-\Lambda(\alpha)$.
\end{theorem}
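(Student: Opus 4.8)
The aim is to identify $I(g)$ with $-\tfrac12\deg(\widetilde\alpha|S^m\times M)$, and Theorem~\ref{parzystoscstopnia} then gives the second equality $-\tfrac12\deg(\widetilde\alpha|S^m\times M)=-\Lambda(\alpha)$ for free. By Proposition~\ref{nasze}, $I(g)=\tfrac12\deg(H,N,0)$ for a suitable compact manifold-with-boundary $N\subset\R^{m+1}\times\R^{m+1}$ containing $\{(x,y):H(x,y)=0,\ x\neq y\}$ in its interior. So the real content is a local comparison of the $2m+2$ equations defining $H^{-1}(0)$ near the diagonal with the $2m+1$ equations defining $\widetilde\alpha^{-1}(0)$ inside $S^m\times M$ (a product of dimension $m+(2m+1-1)=3m$, wait—let me recount: $\dim(S^m\times M)=m+m=2m$, and $\widetilde\alpha$ maps to $\R^{2m+1}$, so we are looking at the degree of $\widetilde\alpha/|\widetilde\alpha|$ on the boundary of $S^m\times D$, which lives in $S^{2m}$; everything is set up so that both sides are degrees of maps between $2m$-manifolds after the right reductions).

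First I would set up the change of variables. Near the diagonal of $\R^{m+1}\times\R^{m+1}$, write $y=x+\beta$ and use the diffeomorphism $\phi$ from the preceding lemma together with Lemma~\ref{suma} to rewrite the system $H(x,y)=0$, $x\neq y$. The two equations $f(x)=0$ and $f(y)=f(x+\beta)=f(x)+\sum_i\beta_i u_i(x,\beta)=0$ combine: since $f(x)=0$ the second becomes $\sum_i\beta_i u_i(x,\beta)=0$, and at $\beta=0$ we have $u_i(x,0)=\partial f/\partial x_i(x)$, i.e. this factor is, to first order, $\langle\beta,\nabla f(x)\rangle$. Meanwhile the $2m$ equations $g_j(x)-g_j(x+\beta)=0$ become $-\sum_i\beta_i w_{ji}(x,\beta)=0$ with $w_{ji}(x,0)=\partial g_j/\partial x_i(x)$. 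So after factoring out an overall $|\beta|$ (on $x\neq y$, i.e. $\beta\neq0$, write $\beta=|\beta|\gamma$ with $\gamma\in S^m$), the leading behavior of $H$ near the diagonal is governed exactly by the linear map $\gamma\mapsto (\langle\gamma,\nabla f(x)\rangle,\langle\gamma,\nabla g_1(x)\rangle,\ldots,\langle\gamma,\nabla g_{2m}(x)\rangle) = \gamma_1\alpha_1(x)+\cdots+\gamma_{m+1}\alpha_{m+1}(x) = \widetilde\alpha(\gamma,x)$, where $\alpha_i(x)$ are precisely the columns defined just before the theorem. This is where the identification of the degrees comes from: $\deg(H,N,0)$ is computed from $H$ near its zero set $\{x=y\}\cap\{x\in M\}$, and after the substitution this zero set and the local index data match those of $\widetilde\alpha$ on $S^m\times M$.

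The precise mechanism I would use is a homotopy/excision argument: use $\phi$ to pull $H$ back to a map on $B^{m+2}(r)\times M$ (coordinates $(\beta,\beta_{m+2};x)$), observe that the zero set with $\beta\neq 0$ sits in a collar, and deform $H$ linearly in the "radial" variable to its leading term without creating new zeros on the boundary, reducing $\deg(H,N,0)$ to a degree involving the $(\beta_{m+2})$-variable and $\widetilde\alpha$. Then apply Lemma~\ref{stopien} to peel off the $\beta_{m+2}$-coordinate: the component of $H$ coming from the second slot of $\phi$, namely $f(x+\beta_{m+2}\nabla f(x))$, equals $\beta_{m+2}\cdot|\nabla f(x)|^2 + O(\beta_{m+2}^2) = \beta_{m+2}\cdot(\text{positive})$ near $M$, so Lemma~\ref{stopien} lets us drop this variable and this equation at the cost of nothing, leaving exactly $\deg(\widetilde\alpha, S^m\times D,0)$ up to the orientation sign. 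Comparing the boundary orientation of $S^m\times D$ with the product orientation (the factor $(-1)^{m-1}=(-1)^{m+1}$ appearing in the proof of Theorem~\ref{efeektywnie}, here trivial since $m$ is even but the ambient manifolds have different parities) and tracking the one genuine sign discrepancy yields the minus sign: $\deg(H,N,0)=-\deg(\widetilde\alpha|S^m\times M)$, hence $I(g)=-\tfrac12\deg(\widetilde\alpha|S^m\times M)=-\Lambda(\alpha)$.

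The main obstacle is the orientation bookkeeping. All the analytic steps—the Hadamard-lemma factorizations, the fact that the leading terms are the $\alpha_i$, the excision/homotopy in the collar, and the reduction via Lemma~\ref{stopien}—are routine once set up. But pinning down the single overall sign requires carefully fixing: the orientation of $M$ (via $\nabla f$, as declared), the product orientation on $\R^{m+1}\times\R^{m+1}$ versus the one induced through $\phi$ on $B^{m+2}(r)\times M$ (the preceding lemma says $\phi$ is orientation-preserving, which is exactly the input needed here), the boundary orientation conventions in Proposition~\ref{nasze} for $\deg(H,N,0)$ versus $\deg(H|\partial N)$, and the ordering of the $2m+2$ coordinates of $H$ against the $2m+1$ coordinates of $\widetilde\alpha$ plus the split-off $\beta_{m+2}$ equation. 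I would organize this by computing the sign on the explicit model immersion (or on a single transverse double point) where $H^{-1}(0)$ near the diagonal is a single point, check that the local index of $H$ there is $-1$ times the corresponding local contribution to $\deg(\widetilde\alpha|S^m\times D,0)$, and then invoke the general position/homotopy invariance already established to conclude in general.
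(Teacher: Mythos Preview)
Your approach is essentially the paper's: pull $H$ back through the tubular-neighborhood diffeomorphism $\phi$, use the Hadamard factorization (Lemma~\ref{suma}) to expose the columns $\alpha_i$, peel off the $\beta_{m+2}$-variable via Lemma~\ref{stopien}, and homotope the remaining map to $\widetilde\alpha$. All of this matches the paper's proof step for step.

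The one genuine difference is the sign. You treat the minus sign as orientation bookkeeping to be resolved by checking a model immersion at the end. The paper instead obtains it structurally and without any case check: it takes $N=\bar B^{2m+2}(R)\setminus\phi(B^{m+2}(r)\times M)$, so $\partial N=S^{2m+1}(R)\cup\partial K$ with $\partial K$ carrying the orientation \emph{opposite} to the one induced from $K$, and then invokes Lemma~\ref{stopienNaSferze} (using that $\{f(x)=f(y)=0\}=M\times M$ is compact) to get $\deg(H|S^{2m+1}(R))=0$. This yields $2I(g)=-\deg(H|\partial K)=-\deg(H\circ\phi,B^{m+2}(r)\times M,0)$ directly, and the rest of the computation proceeds with no further sign ambiguity. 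Your model-check would also work, but the paper's route is cleaner and explains \emph{why} the sign is $-1$: it is the boundary-orientation reversal of the excised tube, combined with the vanishing of the outer-sphere degree. Incidentally, your phrase ``the zero set with $\beta\neq 0$ sits in a collar'' is off---inside $B^{m+2}(r)\times M$ the only zeros of $H\circ\phi$ are on the diagonal $\beta=0$; the off-diagonal zeros of $H$ lie in $N$, outside the tube, and never enter the local computation.
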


\begin{proof} 
Let $\Delta=\{(x,x)\ |\ x\in M\}$ denote the diagonal in $M\times M$.
Note that
$$H^{-1}(0)=\Delta\cup\{(x,y)\in M\times M,\ g(x)=g(y),\ x\neq y\}.$$
By Proposition \ref{nasze}, there is
$\varepsilon>0$ such that $|x-y|>\varepsilon$ for $(x,y)\in H^{-1}(0)\setminus
\Delta$.

Take $r>0$ such that $\phi:B^{m+2}(2r)\times M\rightarrow\R^{2m+2}$ is an orientation preserving diffeomorphism
onto its image.
Put $K=\phi(\bar{B}^{m+2}(r)\times M)$. Then $K$ is a closed tubular neighbourhood of $\phi(\{0\}\times M)=\Delta$ in $\R^{2m+2}$, 
and so $K$ is a $(2m+2)$--dimensional compact  manifold with boundary 
$\partial K=\phi(S^{m+1}(r)\times M)$. Moreover we can assume that 
$$|(x+\beta)-(x+\beta_{m+2}\nabla f(x))|<\varepsilon,\  \mbox{ for }(\beta,\beta_{m+2};x) \in \bar{B}^{m+2}(r)\times M.$$ 
In particular $\partial K\cap H\inv (0)=\emptyset$ and $K\cap H^{-1}(0)=\Delta$.  
For $R>0$ big enough, $N=\bar{B}^{2m+2}(R)\setminus \phi(B^{m+2}(r)\times M)$ is a compact manifold 
with boundary  $S^{2m+1}(R)\cup \partial K$, where the orientation of $\partial K$ is 
opposite to the one induced from $K$.  We may also assume that
$N$ contains
$H\inv(0)\setminus \Delta$ in its interior. According to Proposition \ref{nasze}, 
$$2I(g)=\deg(H,N,0)=\deg (H|S^{2m+1}(R))-\deg (H|\partial K).$$  
The hypersurface  $M=f\inv(0)$ is compact, so $\{f(x)=f(y)=0\}=M\times M\subset\R^{m+1}\times\R^{m+1}$ 
is compact too. According to Lemma \ref{stopienNaSferze}, $\deg (H|S^{2m+1}(R))=0$. So 
$$2I(g)=-\deg (H|\partial K)=-\deg (H\circ \phi|S^{m+1}(r)\times M)=-\deg(H\circ \phi,  B^{m+2}(r)\times M,0).$$
Of course it holds true for any radius smaller than $r$.

According to Lemma \ref{suma}, for $(\beta, \beta_{m+2};x)\in B^{m+2}(r)\times M$ 
the second coordinate of $H\circ\phi$ equals
$$f(x+\beta_{m+2}\nabla f(x))=f(x)+\beta_{m+2}\sum_{i=1}^{m+1}\frac{\partial f}{\partial x_i}(x)u_i(\beta, \beta_{m+2};x)$$
$$=\beta_{m+2}\sum_{i=1}^{m+1}\frac{\partial f}{\partial x_i}(x)u_i(\beta, \beta_{m+2};x),$$ 
where $u_i(0;x)=\frac{\partial f}{\partial x_i}(x)$. For $r$ small enough 
$\sum_{i=1}^{m+1}\frac{\partial f}{\partial x_i}(x)u_i(\beta, \beta_{m+2};x)>0$.
After permuting coordinates, by Lemma \ref{stopien}, we get
$$\deg(H\circ \phi,  B^{m+2}(r)\times M,0)$$
$$=\deg\left((f(x+\beta),g_1(x+\beta)-g_1(x),\ldots,g_{2m}(x+\beta)-g_{2m}(x)),B^{m+1}(r)\times M,0\right).$$ 

By Lemma \ref{suma}, for $(\beta,x)\in B^{m+1}(r)\times M$ we have 
$$\left(f(x+\beta),g_1(x+\beta)-g_1(x),\ldots,g_{2m}(x+\beta)-g_{2m}(x)\right)$$
$$=\left(\sum_1^{m+1}\beta_i u_i(\beta,x),\sum_1^{m+1} \beta_i  w_i(\beta,x)\right)=\sum_1^{m+1}\beta_i\left(u_i(\beta,x),w_i(\beta,x)\right),$$ 
where  $w_i(0,x)=(w_i^1(0,x),\ldots ,w_i^{2m}(0,x))=$ 
$(\frac{\partial g_1}{\partial x_i}(x),\ldots ,\frac{\partial g_{2m}}{\partial x_i}(x)).$ 
Because $g$ is an immersion, by Proposition \ref{nasze} there is small $r$  such that  

$$\rank\left [
\begin{array}{ccc}
u_1(\beta,x)&\ldots &u_{m+1}(\beta,x)\\
w_1^1(\beta,x)&\ldots &w_{m+1}^1(\beta,x)\\
 &\ldots & \\
w_1^{2m}(\beta,x)&\ldots &w_{m+1}^{2m}(\beta,x)\\
\end{array} \right ]=m+1,$$ 
for $(\beta, x)\in \bar{B}^{m+1}(r)\times M$.
Hence the columns $\alpha_1(\beta,x),\ldots,\alpha_{m+1}(\beta,x)$ of the matrix above are linearly independent.
Let
$h_t:\bar{B}^{m+1}(r)\times M\longrightarrow\R^{2m+1}$, $0\leq t\leq 1$, be a homotopy given by 
$$h_t(\beta,x)=\beta_1\alpha_1(t\beta,x)+\ldots +\beta_{m+1}\alpha_{m+1}(t\beta,x),$$ 
so that $h_t(\beta,x)$ is a linear combination of linearly independent vectors.
Then each $h_t^{-1}(0)=\{0\}\times M$.

According to the Excision Theorem  we have  
$$\deg (\widetilde{\alpha}|S^m\times M)=\deg(\widetilde{\alpha},\bar{B}^{m+1}(1)\times M,0)=\deg(\widetilde{\alpha},\bar{B}^{m+1}(r)\times M,0).$$ 
Of course $\widetilde{\alpha}=h_0$, so $\deg(\widetilde{\alpha}|S^m\times M)=\deg(h_0,\bar{B}^{m+1}(r)\times M,0)=
\deg(h_1,\bar{B}^{m+1}(r)\times M,0)$. 
By the previous arguments,  $\deg (h_1,\bar{B}^{m+1}(r)\times M,0)=-2I(g)$. 
To sum up we get that $2I(g)=-\deg (\widetilde{\alpha}|S^m\times M)=-2\Lambda(\alpha)$.

\end{proof}

\begin{example} Let $g=(x_3^3+x_2-x_1-3x_3,x_2^3+2x_1-x_2+x_3,x_1x_2+2x_1,x_1x_3-x_2):\R^3\longrightarrow\R^4$. 
Using {\sc Singular}  \cite{GPS06} and results of Theorems \ref{efeektywnie} and \ref{immersje} one may check that $I(g|S^2(10))=5$.
\end{example}



\end{document}